\documentclass[12pt]{amsart}
\usepackage{amssymb}
\usepackage{amsthm}
\usepackage{mathrsfs}
\usepackage{amsfonts}
\usepackage[english]{babel}
\usepackage[T1]{fontenc}
\usepackage[latin1]{inputenc}
\usepackage{fullpage}
\usepackage{amsmath}
\usepackage[all]{xy}
\usepackage{stmaryrd}
\usepackage{hyperref}
\usepackage{cleveref}
\usepackage{color}

\newtheorem{theorem}{Theorem}[section]
\newtheorem{lemma}[theorem]{Lemma}
\newtheorem{corollary}[theorem]{Corollary}
\theoremstyle{definition}
\newtheorem{definition}[theorem]{Definition}

\newtheorem{proposition}[theorem]{Proposition}
\newtheorem{conjecture}[theorem]{Conjecture}

\newtheorem{ttheorem}{Main Theorem}

\theoremstyle{remark}
\newtheorem{remark}[theorem]{Remark}

\def\dvg{dv_{\bar{g}}}
\newcommand{\bg}{\bar{g}}

\numberwithin{equation}{section}

\newcommand{\oc}[1]{\overset{\circ}{#1}}
\newcommand{\quot}[4]{\int_M \frac{\sigma_{#1}(#3)}{\sigma_{#2}(#3)} dv_{#4}}
\newcommand{\Vol}{\mathrm{Vol}}

\newcommand{\Ric}{\mathrm{Ric}}

\begin{document}

\title{Comparison of total quotient curvature}

\author{Jiaqi Chen}
\address{School of Electrical Engineering and Automation, Xiamen University of Technology, Xiamen 361024, Fujian, P.R. China}
\email{chenjiaqi@xmut.edu.cn}

\author{Yi Fang}
\address{Department of Mathematics, Anhui University of Technology, Ma'anshan 243002, Anhui, P.R.China}
\email{flxy85@163.com}

\author{Jingyang Zhong}
\address{School of Mathematics and Statistics, Fuzhou University, Fuzhou 350108, Fujian, P.R. China}
\email{jyzhong@fzu.edu.cn}

\thanks{Jiaqi Chen is partially supported by the Scientific Research Foundation of Xiamen University of Technology Under Grant (No.YKJ23009R) and the Scientific Research Foundation for Young and Middle aged Teachers in Fujian Province Under Grant (No.JAT231105). Yi Fang is partially supported by National Natural Science
Foundation of China (No.12571065).}

\keywords{ Comparison theorem, Total quotient curvature, Einstein metrics, Schouten tensor, Stable Einstein manifolds}

\begin{abstract}
In this paper, we establish some comparison theorems for the total quotient curvature. Specifically, we examine the  behavior of the functional with respect to the  total quotient curvature and prove that the background Einstein metric achieves a sharp bound on the total quotient curvature. We prove that if the quotient curvature satisfies a point-wise lower (or upper) bound relative to the Einstein metric, then the corresponding integral inequality holds. Also we can show characterize the equality case.  Our result generalizes the volume comparison theorem for scalar curvature and the rigidity results for $\sigma_k$-curvature. 
\end{abstract}

\maketitle

\section{Introduction}\label{sec:introduction}

The interplay between curvature constraints and the global topology of Riemannian manifolds remains one of the most active frontiers in geometric analysis. The classical Bishop-Gromov's volume comparison theorem provides a fundamental framework for studying manifolds with lower Ricci curvature bounds. In recent decades, the Schouten tensor had emerged as a central object of study, particularly in conformal geometry \cite{Viaclovsky-schouten}. Its special invariants, the $\sigma_k$-curvatures, offer a natural hierarchy of geometric conditions that bridge the gap between scalar and sectional curvature. While the $\sigma_k$-Yamabe problem and the rigidity of space forms have been extensively studied, comparison theorems for these fully nonlinear invariants, specifically regarding volume and total curvature functionals, remain less developed \cite{Alice-Gursky-Paul-fully-nonlinear, Viaclovsky-sigma}.

Recent progress has focused on local stability results near canonical metrics. In this context, variational techniques have been proven effective in establishing rigidity for metrics that is close to Einstein metrics in topology. This paper extends this line of inquiry to the quotient curvatures $\frac{\sigma_k}{\sigma_{k-1}}$. These quotients are of particular analytic interest as they correspond to the elementary symmetric functions of the eigenvalues of the Hessian in the context of Monge-Amp\'ere type equations and appear naturally in geometric flows that preserve convexity \cite{Nirenberg-Dirichlet,Trudinger-Dirichlet}.

Let $(M^n, g)$ be a smooth closed Riemannian manifold of dimension $n \geq 3$. We denote the Riemannian curvature tensor by $\mathrm{Rm}_g$, the Ricci curvature tensor by $\mathrm{Ric}_g$, the scalar curvature by $R_g$, and the Schouten curvature tensor by
\begin{align*}
  S_g =\mathrm{Ric}_g - \frac{R_g}{2(n-1)}g.
\end{align*}
Let $\lambda_1, \dots, \lambda_n$ be the eigenvalues of $S_g$, then for $1 \leq k \leq n$, the $\sigma_k$-curvature is defined as the $k$-th elementary symmetric polynomial of these eigenvalues, i.e. 
\begin{equation*}
  \sigma_k(g) = \sum_{1 \leq i_1 < i_2 < \cdots < i_k \leq n} \lambda_{i_1}\lambda_{i_2}\cdots\lambda_{i_k}.
\end{equation*}
In particular,
\begin{equation*}
  \sigma_1(g) = \mathrm{tr}(S_g) = \frac{n-2}{2(n-1)}R_g,
\end{equation*}
and
\begin{equation*}
  \sigma_2(g) = \frac{1}{2}\left( \sigma_1^2(g) - |S_g|^2 \right).
\end{equation*}
So $\sigma_k$-curvatures can be considered as the generalization of the scalar curvatures. The study of $\sigma_k$-curvature and related fully nonlinear curvature equations such as the $\sigma_k$-Yamabe problem \cite{Viaclovsky-sigma} has attracted considerable interest in conformal geometry and PDEs communities.

The quotient curvature arises naturally in conformal geometry, which are defined by
\begin{equation}\label{def: quotient}
    \frac{\sigma_k(g)}{\sigma_l(g)}, \qquad 0 \leq l < k \leq n,
\end{equation}
where $\sigma_0(g) = 1$. And  $\frac{\sigma_k(g)}{\sigma_{k-1}(g)}$ plays the role of a fully nonlinear analogue of the scalar curvature and appears in the Euler-Lagrange equations of certain conformally invariant functionals \cite{GuanPengfei-sigma}. In geometric flows, quotient curvatures govern fully nonlinear curvature flows that generalize mean curvature flow and Gauss curvature flow which are scale-invariant and  convexity conditions preserved related to the G\aa rding cones. 

The classical volume comparison theorem states that if
\begin{equation*}
    \mathrm{Ric}_g \geq (n-1)g,
\end{equation*}
then
\begin{equation*}
    \mathrm{Vol}_{M}(g) \leq \mathrm{Vol}_{\mathbb{S}^n}(g_{\mathbb{S}^n}),
\end{equation*}
where $\mathbb{S}^n$ is the unit round sphere and $g_{\mathbb{S}^n}$ is the canonical metric. 

For closed hyperbolic manifolds, Schoen \cite{Schoen-conjecture} proposed the following conjecture.
\begin{conjecture}
    Let $(M^n, \bg)$ be any closed hyperbolic manifold of dimension $n \geq 3$. Then for any metric $g$ on $M^n$ with
    \begin{equation*}
        R_g \geq R_{\bg},
    \end{equation*}
    the following volume comparison holds:
    \begin{equation*}
        \mathrm{Vol}_{M}(g) \geq \mathrm{Vol}_{M}(\bg).
    \end{equation*}
\end{conjecture}

Schoen's conjecture was proved affirmatively for the 3-dimensional case due to the works of Hamilton \cite{Hamilton-non-singular} and Perelman \cite{Perelman1,Perelman2}. For higher dimensional cases, Besson, Courtois, and Gallot \cite{BCG1, BCG2} verified it for metrics $g$ satisfying $||g-\bg||_{C^2(M^n,\bg)} < \varepsilon_0$ for some small constant $\varepsilon_0 > 0$, i.e., $g$ is $C^2$-close to $\bg$. They also showed that the same result holds without assuming $C^2$-closeness if the assumption on scalar curvature is replaced by Ricci curvature.

If $g$ is $C^2$-close to some stable Einstein metric $\bg$ (see Definition ??), Yuan \cite{Yuan-scalar} proved the following volume comparison theorem with respect to scalar curvature. 

\begin{theorem}[\cite{Yuan-scalar}]\label{Theorem: Yuan}
    Suppose $(M^n,\bg)$ is a strictly stable Einstein manifold with
    \begin{align*}
      \Ric_{\bg} = (n-1)\lambda\bg.
    \end{align*}
    Then there exists a constant $\varepsilon_0>0$ such that for any metric $g$ on $M^n$ satisfying
    \begin{equation*}
    R_g \geq R_{\bg}, \quad ||g - \bg||_{C^2(M^n,\bg)}<\varepsilon_0,
    \end{equation*}
    the following volume comparisons hold:
    \begin{enumerate}
      \item if $\lambda>0$, then $\Vol_{M}(g) \leq \Vol_M(\bg)$;
      \item if $\lambda<0$, then $\Vol_{M}(g) \geq \Vol_M(\bg)$,
    \end{enumerate}
    with equality holding in either case if and only if $g$ is isometric to $\bg$.
\end{theorem}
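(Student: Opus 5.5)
We follow Yuan's variational strategy: reduce to metrics of constant scalar curvature via the Yamabe problem, then use a second-order expansion of the volume (equivalently the normalized total scalar curvature) around $\bg$, where strict stability gives a definite sign.

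\emph{Step 1 (conformal normalization).} Since $g$ is $C^2$-close to $\bg$ and $\bg$ is Einstein, the Yamabe constant of $[g]$ is close to that of $[\bg]$. By Obata's theorem (for $\lambda>0$) and uniqueness (for $\lambda\le 0$), $\bg$ is the unique Yamabe metric in its class up to scaling. Implicit function arguments then give a unique constant scalar curvature metric $\tilde g=u^{4/(n-2)}g$ near $\bg$, normalized so that $R_{\tilde g}=R_{\bg}$, with $u$ close to $1$ in $C^{2,\alpha}$. From $R_g\ge R_{\bg}$ and the maximum principle applied to the conformal Laplacian equation
\[
-\tfrac{4(n-1)}{n-2}\Delta_g u+R_g u=R_{\bg}u^{\frac{n+2}{n-2}},
\]
at an extremum of $u$ we get $u\ge1$ somewhere in the appropriate sense. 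More directly, multiplying by $u$ and integrating gives $\Vol(g)\le\Vol(\tilde g)$ when $\lambda>0$ and $\Vol(g)\ge\Vol(\tilde g)$ when $\lambda<0$. (Use Hölder with the Yamabe quotient: $Y(\tilde g)\le$ the quotient of $g$ evaluated at $u\equiv1$, which is at least $R_{\bg}\Vol(g)^{2/n}$ if $R_{\bg}>0$. The case $R_{\bg}<0$ is handled symmetrically.)

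\emph{Step 2 (slice).} Apply Ebin's slice theorem to replace $\tilde g$ by $\phi^*\tilde g$ with $h=\phi^*\tilde g-\bg$ divergence-free with respect to $\bg$ and small in $C^{2,\alpha}$. Decompose $h=\rho\bg+h^{TT}+L_X\bg$-type corrections. The constraint $R=R_{\bg}$ forces the trace part to be of second order, controlled by $|h^{TT}|^2$.

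\emph{Step 3 (second variation).} Expand $\Vol$ on the constraint set $\{R=R_{\bg}\}$ to second order. Equivalently, expand $\int R\,dv-R_{\bg}\Vol$. The first variation vanishes because $\bg$ is Einstein. The second variation on transverse-traceless $h$ is, up to a positive constant factor times $\lambda$, given by
\[
-\int\langle \Delta_E h,h\rangle\,dv_{\bg},
\]
where $\Delta_E$ is the Einstein operator. Strict stability means $\Delta_E$ is positive definite on TT tensors, so $\Vol(\tilde g)-\Vol(\bg)\approx -c\lambda\|h^{TT}\|_{H^1}^2$. The cubic remainder is bounded by $\varepsilon_0\|h\|_{H^1}^2$, giving the correct sign for small $\varepsilon_0$.

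\emph{Step 4 (rigidity).} Combining Steps 1 and 3 yields the inequalities. If equality holds, then $h^{TT}=0$ and the trace part vanishes, so $\tilde g$ is isometric to $\bg$. Equality in Step 1 then forces $u\equiv1$ and $R_g=R_{\bg}$.

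The main obstacle is Step 3. It requires controlling the nonlinear constraint, i.e. showing the conformal/trace component is quadratically small in $h^{TT}$ uniformly. It also requires absorbing the cubic error terms using only $C^2$-closeness. We would handle this with the implicit function theorem on $\mathrm{ker}(\mathrm{div})$ and careful Taylor remainder estimates.
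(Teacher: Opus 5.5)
Your Step 1 has a genuine gap, and it lies in the case $\lambda>0$. The inequality $\Vol(g)\le\Vol(\tilde g)$ does not follow from what you write. The Yamabe property gives $R_{\bg}\Vol(\tilde g)^{2/n}\le Q_g(1)$, and the hypothesis $R_g\ge R_{\bg}$ gives $R_{\bg}\Vol(g)^{2/n}\le Q_g(1)$; two quantities bounded above by the same number cannot be compared. Multiplying the Yamabe equation by $u$ gives $\int_M u^{2n/(n-2)}\,dv_g\ge\int_M u^2\,dv_g$, but H\"older also bounds $\int_M u^2\,dv_g$ from above, so again nothing follows. The maximum principle at a maximum point of $u$ only gives $\max u\ge 1$, and at a minimum point it gives no information. (For $\lambda<0$ the maximum principle does give $u\le 1$ everywhere, hence $\Vol(\tilde g)\le\Vol(g)$, so there your outline is viable.)

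In fact no integrated form of the constraint can work. Write $g=(1+f)^{4/(n-2)}\tilde g$ and integrate $R_g\ge R_{\tilde g}$ against $dv_{\tilde g}$; this gives $\int_M f\le-\frac{n+2}{2(n-2)}\int_M f^2+O(\int_M|f|^3)$. Inserting this into $\Vol(g)=\Vol(\tilde g)+\frac{2n}{n-2}\int_M f+\frac{n(n+2)}{(n-2)^2}\int_M f^2+O(\int_M|f|^3)$, the quadratic terms cancel exactly. So Step 1 is itself a second-order conformal volume comparison at the non-Einstein metric $\tilde g$, and the gradient terms must be used. The second variation of the natural functional $\Vol(g)^{2/n}\int_M R_g\,dv_{\tilde g}$ on $[\tilde g]$ is a negative multiple of $\int_M|\nabla f|^2-\frac{R_{\bg}}{n-1}\int_M(f-\bar f)^2$. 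You would therefore need $\lambda_1(\tilde g)>R_{\bg}/(n-1)$ uniformly in $\tilde g$, plus a Morse-type argument with parameters. This degenerates near the round sphere, where $\lambda_1(\bg)=n\lambda=R_{\bg}/(n-1)$. There the implicit function theorem you invoke also fails, because the linearized Yamabe operator at $\bg$ is a multiple of $\Delta_{\bg}+n\lambda$, whose kernel is $E_{n\lambda}$. The equality discussion in Step 4 inherits the same gap.

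The missing idea is Yuan's, and the paper generalizes it through $\mathcal{H}_{\bg}$ (the case $k=1$, $l=0$, $p=q$ is $\Vol(g)^2[\int_M\sigma_1(g)\,dv_{\bg}]^n$): freeze the background measure. The pointwise hypothesis then integrates directly to $\int_M R_g\,dv_{\bg}\ge R_{\bg}\Vol(\bg)$, and no conformal normalization is needed. It then suffices to show that $\bg$ is a strict local maximum of this scale-invariant functional modulo scaling and diffeomorphisms. Its second variation handles both kinds of directions at the Einstein metric itself:
\begin{itemize}
\item On TT tensors it is $\frac{\alpha}{4(n-1)\lambda}\int_M\oc{h}\cdot\Delta_E^{\bg}\oc{h}\,dv_{\bg}$. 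Up to the factor $\alpha$, this agrees with the constrained second variation in your Step 3, which is fine. With the paper's convention $\Delta_E=\Delta+2\mathrm{Rm}$, strict stability means $\Delta_E<0$ on TT tensors, and the coefficient carries $\lambda^{-1}$ rather than $\lambda$; the sign conclusion is unaffected.
\item On conformal directions it is a negative multiple of $\int_M[|\nabla u|^2-n\lambda(u-\bar u)^2]\,dv_{\bg}$ with $u=tr_{\bg}h$, which Lichnerowicz--Obata controls. This is exactly the piece your Step 1 was meant to provide.
\end{itemize}
The slice theorem removes the $E_{n\lambda}$ directions on the sphere, and the Morse lemma then gives both the inequality and the rigidity.
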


Following this volume comparison theorem, Lin and Yuan \cite{Yuan-Lin-1, Yuan-Lin-2} proved the similar results in the $Q$-curvature setting. Then Andrade et al.\cite{Andrade-sigma-2} and Chen et al. \cite{Chen-sigma-k} considered in the setting of $\sigma_2$-curvature and $\sigma_k$-curvature ($k\geq2$), respectively. 

Motivated by these results on the volume comparison theorems on scalar type curvatures, in this paper, we consider the comparison theorem with respect to the total quotient curvatures.

\begin{ttheorem}\label{theorem: main1}
    Let $(M^n, \bar{g})$ be a strictly stable Einstein manifold with
    \begin{equation*}
        \Ric_{\bar{g}} = (n-1)\lambda \bar{g},
    \end{equation*}
    where $\lambda > 0$ is a constant. Given positive integers $1\leq l < k \leq n$ and $1 \leq q \leq p \leq n$, there exists a constant $\varepsilon_0 > 0$ such that for any metric $g$ on $M^n$ satisfying
    \begin{equation*}
        \|g - \bar{g}\|_{C^2(M^n, \bar{g})} < \varepsilon_0,
    \end{equation*}
    if one of the following assumptions holds,
    \begin{equation*}
        \begin{split}
            (1) \ \ \frac{\sigma_k(g)}{\sigma_l(g)}  \geq  \frac{\sigma_k(\bar{g})}{\sigma_l(\bar{g})}, \ \ 2(p-q) < n; \ \ (2) \ \ \frac{\sigma_k(g)}{\sigma_l(g)}  \leq  \frac{\sigma_k(\bar{g})}{\sigma_l(\bar{g})}, \ \ 2(p-q) \geq n + 2(k-l),
        \end{split}
    \end{equation*}
    then we have the total quotient curvature comparison
    \begin{equation*}
        \int_M \frac{\sigma_p(g)}{\sigma_q(g)} \, dv_g \leq \int_M \frac{\sigma_p(\bar{g})}{\sigma_q(\bar{g})} \, dv_{\bar{g}}.
    \end{equation*}
 Moreover, equality holds if and only if $g$ is isometric to $\bar{g}$.
\end{ttheorem}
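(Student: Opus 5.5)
The plan follows Yuan's scheme for Theorem \ref{Theorem: Yuan}: fix the gauge, expand both the hypothesis and the target functional to second order in $h=g-\bar g$, use the pointwise hypothesis to control the linear term, and use strict stability to control the quadratic term.

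\textbf{Gauge fixing.} Since both the hypothesis and $\int_M \frac{\sigma_p}{\sigma_q}\,dv$ are diffeomorphism invariant, the Ebin--Palais slice theorem lets us assume $g=\bar g+h$ with $\delta_{\bar g}h=0$ and $\|h\|_{C^2}\lesssim\varepsilon_0$. We then split $h=h^{TT}+\frac{1}{n}(\mathrm{tr}_{\bar g}h)\bar g+\mathcal L_X\bar g$, where the divergence-free condition ties $X$ to $\mathrm{tr}\,h$; the Lie-derivative part is of higher order after re-gauging.

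\textbf{Linearization of the quotients.} At $\bar g$ the Schouten tensor is $S_{\bar g}=c\,\bar g$ with $c=\frac{(n-2)\lambda}{2}>0$, so $\sigma_j(\bar g)=\binom nj c^j$. The derivative of $\sigma_j$ at a multiple of the identity is $\binom{n-1}{j-1}c^{j-1}\,\mathrm{tr}(S')$. Hence the linearization of $\sigma_k/\sigma_l$ is a positive multiple $a_{kl}$ of $\sigma_1'$, with $a_{kl}\propto (k-l)$ after combining the two terms (and scaling weight $-(k-l)$). Moreover $\sigma_1'$ is a multiple of $R'_{\bar g}h=-\Delta\,\mathrm{tr}h+\delta\delta h-(n-1)\lambda\,\mathrm{tr}h$, which reduces to $-\Delta\,\mathrm{tr}h-(n-1)\lambda\,\mathrm{tr}h$ in our gauge. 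Integrating the pointwise hypothesis (1) (resp.\ (2)) against $dv_{\bar g}$ then gives
\[
\mp\,C_1\int_M \mathrm{tr}\,h\,dv_{\bar g}\;\ge\; Q_{kl}(h)+O(\|h\|_{C^2}\|h\|^2_{H^1}),
\]
where $Q_{kl}$ is the explicit second variation of $\sigma_k/\sigma_l$.

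\textbf{Expansion of the target.} Similarly,
\[
\int_M\frac{\sigma_p(g)}{\sigma_q(g)}dv_g-\int_M\frac{\sigma_p(\bar g)}{\sigma_q(\bar g)}dv_{\bar g}=\beta_{pq}\int_M\mathrm{tr}\,h\,dv_{\bar g}+\tfrac12 D^2F(h,h)+O(\|h\|_{C^2}\|h\|^2_{H^1}).
\]
By the scaling weights (the integrand scales as $t^{n/2-(p-q)}$), $\beta_{pq}$ is a positive multiple of $\frac n2-(p-q)$. In case (1) we have $\beta_{pq}>0$, and we eliminate $\int\mathrm{tr}\,h$ using the integrated hypothesis, which bounds $\int \mathrm{tr}\,h$ above by a quadratic term. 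In case (2) we have $\beta_{pq}<0$, and the reversed hypothesis gives the reversed bound. Either way we obtain $F(g)-F(\bar g)\le \mathcal B(h,h)+\text{cubic}$, where $\mathcal B=\frac12D^2F-\frac{|\beta_{pq}|}{C_1}Q_{kl}$.

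\textbf{Main obstacle: negativity of $\mathcal B$.} The hard step is showing $\mathcal B(h,h)\le -c\|h\|_{H^1}^2$. On the TT part, both second variations reduce to multiples of $\langle(\Delta_L-2(n-1)\lambda)h^{TT},h^{TT}\rangle$ plus a $|h^{TT}|^2$ term, so strict stability of $\bar g$ should give negativity. On the conformal part $h=u\bar g$, we get a quadratic form in $u$ involving $\int|\nabla u|^2$ and $\int u^2$. We use Lichnerowicz--Obata, namely $\lambda_1(-\Delta)\ge n\lambda$ with equality only on the round sphere, where the first eigenfunctions are conformal Killing directions that are gauge and can be removed. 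We will check that the resulting coefficient inequality is exactly $2(p-q)<n$ in case (1) and $2(p-q)\ge n+2(k-l)$ in case (2). This bookkeeping of the second variations of $\sigma_p/\sigma_q$ and $\sigma_k/\sigma_l$ at $c\bar g$ is where I expect the real work.

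\textbf{Conclusion and equality.} Once $\mathcal B$ is negative, choosing $\varepsilon_0$ small absorbs the cubic remainder and gives $F(g)\le F(\bar g)$. Equality forces $h=0$ in the slice, so $g$ is isometric to $\bar g$.
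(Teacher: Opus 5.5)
\textbf{The gap.} The step that fails is the one you flagged as the main obstacle: $\mathcal{B}$ is \emph{not} negative definite. No bookkeeping can fix this, because $\mathcal{B}$ is positive on the scaling direction $h=\bar g$.

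Unwinding your signs, in both cases
\[
\mathcal{B}(h,h)=\tfrac12D^2F(\bar g)(h,h)+\frac{\beta_{pq}}{2C_1}D^2G(\bar g)(h,h),
\]
where $G(g)=\int_M\frac{\sigma_k(g)}{\sigma_l(g)}\,dv_{\bar g}$, $\beta_{pq}=\frac{\beta}{2n}A_{pq}$, $C_1=\frac{k-l}{n}A_{kl}$, $\alpha=2(k-l)$ and $\beta=n-2(p-q)$. The scalings $F((1+s)\bar g)=(1+s)^{\beta/2}F(\bar g)$ and $G((1+s)\bar g)=(1+s)^{-(k-l)}G(\bar g)$ are exact, so
\[
\mathcal{B}(t\bar g,t\bar g)=\frac{\beta(\alpha+\beta)}{8}\,A_{pq}\,\mathrm{Vol}_{\bar g}(M)\,t^{2}.
\]
This is strictly positive in case (1) and nonnegative in case (2), vanishing only when $2(p-q)=n+2(k-l)$. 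Indeed, the index conditions are exactly what make $\beta(\alpha+\beta)\ge 0$.

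Lichnerowicz--Obata cannot rescue this, since it only controls $u-\bar u$. In your conformal form the relevant term is $\int_M(|\nabla u|^2-n\lambda u^2)\,dv_{\bar g}$ with a nonpositive coefficient, and it is negative on constants.

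The loss is genuine, not an artifact. For $g=(1+t)\bar g$ with $t<0$, which (1) allows, $F(g)-F(\bar g)\approx\frac{\beta}{2}t\,A_{pq}\mathrm{Vol}_{\bar g}(M)$ is negative at \emph{first} order. Replacing $\beta_{pq}\int_M\mathrm{tr}\,h$ by the quadratic upper bound from the integrated hypothesis turns this into $+ct^2$.

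\textbf{The missing idea.} You need to split off the scaling mode.

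The paper does this with the scale-invariant functional $\mathcal{H}_{\bar g}=F^{\alpha}G^{\beta}$. It is constant along $c^2\bar g$, so $D\mathcal{H}_{\bar g}(\bar g)=0$ and $D^2\mathcal{H}_{\bar g}(\bar g)(\bar g,\cdot)=0$. Negativity is then needed only on the complement of $\mathbb{R}\cdot\bar g$ in the slice. In fact, on $\{\int_M\mathrm{tr}\,h\,dv_{\bar g}=0\}$ your $\mathcal{B}$ is a positive multiple of $D^2\mathcal{H}_{\bar g}(\bar g)$. The argument then runs as follows:
\begin{enumerate}
\item The Morse lemma gives local rigidity: $\mathcal{H}_{\bar g}(g)\ge\mathcal{H}_{\bar g}(\bar g)$ forces $g=c^2\bar g$.
\item If $F(g)\ge F(\bar g)$, the pointwise hypothesis gives $G(g)^{\beta}\ge G(\bar g)^{\beta}$, hence $\mathcal{H}_{\bar g}(g)\ge\mathcal{H}_{\bar g}(\bar g)$, hence $g=c^2\bar g$.
\item The explicit scalings $F(c^2\bar g)=c^{\beta}F(\bar g)$ and $\frac{\sigma_k}{\sigma_l}(c^2\bar g)=c^{-\alpha}A_{kl}$ then force $c=1$.
\end{enumerate}

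If you prefer to keep your direct expansion, write $h=t\bar g+h_\perp$ with $\int_M\mathrm{tr}\,h_\perp\,dv_{\bar g}=0$. Homogeneity kills the cross terms in $D^2F$ and $D^2G$. Prove negativity only for $h_\perp$, and treat $t$ at first order:
\begin{itemize}
\item when $|t|$ dominates $\|h_\perp\|^2$, the hypothesis fixes the sign of $t$ and the linear term $\beta_{pq}nt\,\mathrm{Vol}_{\bar g}(M)$ wins;
\item otherwise $t^2$ is quartic in $h_\perp$ and is absorbed.
\end{itemize}

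\textbf{Two smaller points.} First, in Ebin's divergence-free gauge the $\mathcal{L}_X\bar g$ piece is first order, not higher order. Use the slice with tangent space $S^{TT}_{2,\bar g}(M)\oplus C^\infty(M)\cdot\bar g$ directly, as the paper does. Second, the second variations contain $\int_M|\Delta_E h^{TT}|^2$ and $\int_M(\Delta\,\mathrm{tr}\,h)^2$, and the cubic remainders contain $h*\nabla^2h*\nabla^2h$. So the coercivity and remainder estimates must be done in $H^2$, not $H^1$.
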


\begin{remark}
    \Cref{theorem: main1} is the generalization of several comparison results. The case $p = q$ corresponds to the volume comparison theorem. Note that $\sigma_0(g) = 1$, then for the specific choice $l=p=q=0$, it reduces to the volume comparison theorems for $\sigma_k$-curvature in \cite{Chen-sigma-k}, and further reduces to the volume comparison theorem with respect to the scalar curvature when $k=1$ \cite{Yuan-scalar}.
    Moreover, the case $l=q=0$ yields a comparison for the total $\sigma_p$-curvature under the $\sigma_k$ lower bound which is asserted in \cite{Chen-total-sigma-k}.
\end{remark}

\begin{remark}
    The admissibility condition for the indices $p$ and $q$ depends on the parity of the dimension $n$, particularly in the case where the constraints on the background curvature are minimal, i.e. $k-l=1$. If $n$ is even, since $p-q\neq n/2$ due to the definition of the key functional we defined in \Cref{definition of functional}, then assumption (1) and (2) cover all possible values of $p-q$. However, if $n$ is odd, assumption (1) and (2) can't cover the case $p-q = \frac{n+1}{2}$ since the second order variation of the key functional will change sign.
\end{remark}
\begin{corollary} \label{thm: gap_result}
    Let $(M^n, \bar{g})$ be a strictly stable Einstein manifold with
    \begin{equation*}
        \Ric_{\bar{g}} = (n-1)\lambda \bar{g},
    \end{equation*}
    where $\lambda > 0$ is a constant. Given positive integers $1\leq l < k \leq n$ and $1 \leq q \leq p \leq n$, there exists a constant $\varepsilon_0 > 0$ such that for any metric $g$ on $M^n$ satisfying
    \begin{equation*}
        \|g - \bar{g}\|_{C^2(M^n, \  \bar{g})} < \varepsilon_0,
    \end{equation*}
    if one of the following assumption holds,
    \begin{equation*}
        \begin{split}
            (1') \ \ \frac{\sigma_k(g)}{\sigma_{k-1}(g)}  \geq  \frac{\sigma_k(\bar{g})}{\sigma_{k-1}(\bar{g})}, \ \ 2(p-q) < n; \ \ (2') \ \ \frac{\sigma_k(g)}{\sigma_{k-1}(g)}  \leq  \frac{\sigma_k(\bar{g})}{\sigma_{k-1}(\bar{g})}, \ \ 2(p-q) \geq n + 2,
        \end{split}
    \end{equation*}
    then we have the total quotient curvature comparison
    \begin{equation*}
        \int_M \frac{\sigma_p(g)}{\sigma_q(g)} \, dv_g \leq \int_M \frac{\sigma_p(\bar{g})}{\sigma_q(\bar{g})} \, dv_{\bar{g}}.
    \end{equation*}
    Moreover, the equality holds if and only if $g$ is isometric to $\bar{g}$.
\end{corollary}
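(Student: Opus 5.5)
This corollary is the special case $l = k-1$ of \Cref{theorem: main1}, so the plan is to specialize the indices and check that every hypothesis of the Main Theorem is met.

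\textbf{Indices.} Taking $l = k-1$ requires $l \geq 1$, i.e. $k \geq 2$. If instead the convention $\sigma_0 = 1$ is allowed, so that $l = 0$ when $k=1$, the Main Theorem covers that case as well; this is consistent with the reduction recorded in the Remark following \Cref{theorem: main1}. The condition $1 \leq q \leq p \leq n$ is assumed identically in both statements.

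\textbf{Hypotheses.} With $l = k-1$ we have $k-l=1$. Then assumption $(1')$ is exactly assumption (1), since the curvature inequality is the same and the index condition $2(p-q)<n$ is unchanged. Likewise $(2')$ is exactly (2), because $n+2(k-l) = n+2$. The remaining hypotheses also match: strict stability of the Einstein metric $\bar g$ with $\lambda>0$, and the $C^2$-closeness $\|g-\bar g\|_{C^2(M^n,\bar g)}<\varepsilon_0$ with the $\varepsilon_0$ supplied by \Cref{theorem: main1} for these indices.

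\textbf{Conclusion.} Applying \Cref{theorem: main1} gives
\begin{equation*}
\int_M \frac{\sigma_p(g)}{\sigma_q(g)}\,dv_g \leq \int_M \frac{\sigma_p(\bar g)}{\sigma_q(\bar g)}\,dv_{\bar g},
\end{equation*}
and the characterization of the equality case (equality if and only if $g$ is isometric to $\bar g$) transfers unchanged.

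The only point needing care is that the quotients appearing in $(1')$ and $(2')$ are well defined, i.e. that $\sigma_{k-1}(g) \neq 0$. Since $\bar g$ is Einstein with $\lambda>0$, its Schouten tensor is $S_{\bar g} = \frac{n-2}{2}\lambda\,\bar g$, so every $\sigma_j(\bar g)$ is a positive constant. By $C^2$-closeness each $\sigma_j(g)$ is then positive once $\varepsilon_0$ is small, and the quotients make sense. This positivity is already implicit in the Main Theorem, so there is no real obstacle: the corollary is a direct specialization of \Cref{theorem: main1}.
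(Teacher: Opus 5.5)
Your proposal is correct and matches how the paper obtains the corollary: it is the specialization $l=k-1$ of the Main Theorem, where $k-l=1$ turns the index condition $2(p-q)\geq n+2(k-l)$ into $2(p-q)\geq n+2$. Your aside about $k=1$, $l=0$ is unnecessary and not quite right, since the corollary's hypothesis $1\leq l<k$ already forces $k\geq 2$ and the Main Theorem as stated requires $l\geq 1$; dropping the aside leaves the argument unaffected.
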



Notably, even restricting to the volume case, our result provides sharper quantitative control. For instance, if we consider the case $k=1$ (where the assumption is $R_g \geq R_{\bg}$) and apply the comparison for indices $p=1, q=0$, we obtain
\begin{equation*}
    \int_M R_g \, dv_g \leq \int_M R_{\bg} \, dv_{\bg}.
\end{equation*}
If we further assume that $R_g$ is a constant, this inequality implies
\begin{equation*}
    R_g \Vol_M(g) \leq R_{\bg} \Vol_M(\bg),
\end{equation*}
which yields the explicit ratio bound:
\begin{equation*}
    \frac{\Vol_{M}(g)}{\Vol_M(\bg)} \leq \frac{R_{\bg}}{R_{g}}.
\end{equation*}
Since $R_g \geq R_{\bg}$, this is a strictly stronger estimate than the standard volume comparison $\Vol_{M}(g) \leq \Vol_M(\bg)$ provided by \Cref{Theorem: Yuan} when the scalar curvature is strictly increasing.

This paper is organized as follows. In \Cref{sec:preliminary}, we introduce some necessary notation and preliminaries that will be used throughout this paper. In \Cref{sec:functional}, we calculate the first and second order variational formulas for the quotient curvature functionals. In \Cref{sec:main}, we prove the main theorems. 

\vskip 0.1 in
{\bf Acknowledgments.} The authors would like to thank Professor Wei Yuan for suggesting this problem and the referees for useful comments. 

\section{Preliminary}\label{sec:preliminary}

\subsection{Notations}

Given a Riemannian manifold $(M^n,g)$, we adopt the following convention for Riemann and Ricci curvature tensors
\begin{align*}
  \mathrm{Rm}(X,Y,Z,W) =  R(X, Y)Z\cdot W ,
\end{align*}
where
\begin{align*}
  R(X, Y)Z = \nabla_X \nabla_Y Z - \nabla_Y \nabla_X Z - \nabla_{[X, Y]}Z.
\end{align*}
And in the local coordinate system, for $1\leq i,j,k,l\leq n$, we denote Riemann and Ricci curvature tensors by
\begin{align*}
  R_{ijkl} &= \mathrm{Rm}(\partial_i, \partial_j, \partial_k, \partial_l) = g_{ml}R_{ijk}^m,\\
  R_{jk} &= g^{il}R_{ijkl},
\end{align*}
the Schouten tensor by
\begin{align*}
  S_{ij}=R_{ij}-\frac{R_g}{2(n-1)}g_{ij},
\end{align*}
and the Einstein operator by
\begin{equation*}
    \Delta_E^{g} = \Delta_{g}+ 2\mathrm{Rm}_{g}    
\end{equation*}
where $\Delta_{g}$ is the Laplace-Beltrami operator.

\subsection{Stable Einstein metric} Let $S_2(M^n,\bg)$ be the space of symmetric 2-tensors on $(M^n,\bg)$, and we denote it by $S_2(M)$ for short, then 
\begin{align*}
      S^{TT}_{2, \bg}(M):=\{ h\in S_2(M)| \delta_{\bg} h=0, tr_{\bg}h=0\}
    \end{align*}
is called the space of transverse-traceless symmetric $2$-tensor on $(M^n, \bg)$. Here $\delta_{\bg}$ is the divergence operator defined by$$(\delta_{\bg}h)_i:=\nabla_{\bg}^jh_{ij}.$$
\begin{definition}
    For $n \geq 3$, suppose $(M^n, \bg)$ is a closed Einstein manifold with
    \begin{align*}
      \mathrm{Ric}_{\bg} = (n-1) \lambda \bg.
    \end{align*}
Then $\bg$ is called stable if $\Delta_E^{\bg}$ is a non-positive operator on $S^{TT}_{2, \bg}(M) \setminus \{0\}$, i.e.
\begin{align*}
  \inf_{h \in S^{TT}_{2, \bg}(M) \setminus \{0\}} \frac{-\int_M  h\cdot\Delta_E^{\bg} h dv_{\bar{g}}}{\int_M |h|^2 dv_{\bar{g}}} \geq 0.
\end{align*}
Moreover, $\bg$ is called strictly stable if $\Delta_E^{\bg}$ is a strictly negative operator on $S^{TT}_{2, \bg}(M) \setminus \{0\}$, i.e.
\begin{align*}
 \inf_{h \in S^{TT}_{2, \bg}(M) \setminus \{0\}} \frac{-\int_M  h\cdot\Delta_E^{\bg} h dv_{\bar{g}}}{\int_M |h|^2 dv_{\bar{g}}} > 0.
\end{align*}
\end{definition}

\subsection{{$\sigma_k$}-curvature}
In order to express the $\sigma_k$-curvature in a compact way, we first introduce the generalized Kronecker delta follows by Reilly's calculation in~\cite{Reilly-1, Reilly-2}.
\begin{definition}
    The generalized Kronecker delta is denoted by
    \begin{align*}
        \delta^{j_1\cdots j_k}_{i_1\cdots i_k}=
            \begin{cases}
            1, ~\text{if } j_1 \cdots j_k \text{ are distinct integers with even permutation of } i_1\cdots i_k;\\
            -1, ~\text{if } j_1\cdots j_k \text{ are distinct integers with odd permutation of } i_1\cdots i_k;\\
            0,~\text{in other cases}.
            \end{cases}
    \end{align*}
\end{definition}
The contraction rule of the generalized Kronecker delta is known as
\begin{align*}
    \delta^{i_1\cdots i_p}_{j_1\cdots j_p}\delta^{j_1\cdots j_k}_{i_1\cdots i_k}=p!\frac{(n-k+p)!}{(n-k)!} \delta^{j_{p+1}\cdots j_k}_{i_{p+1}\cdots i_k},
\end{align*}
where $1 \leq p\leq k-1$ and $2\leq k\leq n$. For the special case  $p=1$, we have
\begin{align*}
  \delta^{i_1}_{j_1}\delta^{j_1\cdots j_k}_{i_1\cdots i_k}=(n-k+1) \delta^{j_{2}\cdots j_k}_{i_{2}\cdots i_k}.
\end{align*}
By the induction argument, it holds
\begin{align*}
  \delta^{i_1}_{j_1}\cdots\delta^{i_p}_{j_p}\delta^{j_1\cdots j_k}_{i_1\cdots i_k}=(n-k+1)\cdots (n-k+p) \delta^{j_{p+1}\cdots j_k}_{i_{p+1}\cdots i_k}=\frac{(n-k+p)!}{(n-k)!}\delta^{j_{p+1}\cdots j_k}_{i_{p+1}\cdots i_k}.
\end{align*}

Now, we provide an equivalent definition of $\sigma_k$-curvature as follows.

\begin{definition}
    The $\sigma_k$-curvature is defined as
      \begin{align*}
        \sigma_k(g)=\frac{1}{k!}\delta^{j_1\cdots j_k}_{i_1\cdots i_k}S^{i_1}_{j_1}\cdots S^{i_k}_{j_k}.
      \end{align*}
\end{definition}
If $(M^n, \bg)$ is an Einstein manifold of $n$-dimension with $\mathrm{Ric}_{\bar{g}}=(n-1)\lambda \bar{g}$, we have
\begin{align*}
    S_{\bg}=\frac{n-2}{2} \lambda \bg, \quad \quad  \sigma_k(\bg) = \left(\frac{n-2}{2} \lambda \right)^k \binom{n}{k},
\end{align*}
where $\binom{n}{k}=\frac{n!}{(n-k)!k!}$ $(1\leq k\leq n)$ is the combination number.

\subsection{Basic variational formulas}
We shall calculate the first and second variations for volume, scalar curvature and $\sigma_k$-curvature which will be used in the later sections. Throughout this paper, we assume that $(M^n, \bg)$ is a closed $n$-dimensional Einstein manifold satisfying $\mathrm{Ric}_{\bg} = (n-1) \lambda \bg$ with $\lambda>0$. Under these assumptions, we derive the following basic variational formulas at $\bg$ in the direction of $h$.

The first and second variations of the volume functional are denoted respectively by
\begin{equation}\label{first volume variation}
    (\mathrm{Vol})'(\bg) := D\mathrm{Vol}(\bg)\cdot h=\frac{1}{2}\int_Mtr_{\bg}h \ dv_{\bg}
\end{equation}
and
\begin{equation}\label{second volume variation}
    \begin{split}
        (\mathrm{Vol})''(\bg) := D^2\mathrm{Vol}(\bg)(h,h) = \frac{n-2}{4n}\int_M(tr_{\bg}h)^2dv_{\bg}-\frac{1}{2}\int_M| \overset{\circ}{h}|^2dv_{\bg},
    \end{split}
\end{equation}
where $\overset{\circ}{h}=h-\frac{tr_{\bg}h}{n}\bg$.
The first variation of scalar curvature is
\begin{equation}
    R'_{\bg} :=DR(\bg)\cdot h = -\Delta_{\bg}(tr_{\bg}h)+\delta^2_{\bg}h-(n-1)\lambda tr_{\bg}h.
\end{equation}
The first and second variations of the $\sigma_k$-curvature are
\begin{equation}\label{first sigma-k variation}
   \sigma_k'(\bg) := D\sigma_k(\bg)\cdot h = \frac{k}{n(n-1)\lambda} \binom{n}{k} \left(\frac{n-2}{2}\lambda\right)^k R'_{\bg}= \frac{k}{n(n-1)\lambda} \sigma_k(\bg) R'_{\bg}
\end{equation}
and
\begin{equation}\label{second sigma-k variation}
    \begin{split}
        {\sigma_k}''(\bg)
        =\frac{2k}{n(n-2)\lambda} \sigma_k(\bg) &\bigg[tr_{\bg} S''_{\bg} -\frac{2(k-1)}{(n-1)(n-2) \lambda} | S'_{\bg} |^2 - \frac{2(n-k)}{n-1}h^{ij} S'_{ij}(\bg)\\
        &+\frac{(k-1)(n-2)}{2(n-1)^3 \lambda} (R'_{\bg})^2 + \frac{(2n-k-1)(n-2)}{2(n-1)} \lambda |h|^2\bigg].
    \end{split}
\end{equation}

For the sake of simplicity, we denote by
\begin{equation}
    A_{kl} := \frac{\sigma_k(\bg)}{\sigma_l(\bg)} = \frac{\left( \frac{n-2}{2} \lambda \right)^k \binom{n}{k} }{\left( \frac{n-2}{2} \lambda \right)^l \binom{n}{l} } = \left( \frac{n-2}{2} \lambda \right)^{k-l} \frac{\binom{n}{k}}{\binom{n}{l}},
\end{equation}
\begin{equation}
\left( \frac{\sigma_k}{\sigma_l} \right)'(\bg):=
        D \left( \frac{\sigma_k}{\sigma_l} \right)(\bg) \cdot h , 
\end{equation}
\begin{equation}
 \left( \frac{\sigma_k}{\sigma_l} \right)''(\bg)
            :=D^2 \left( \frac{\sigma_k}{\sigma_l} \right) (\bg)(h,h). 
\end{equation}
With these notations, the first and second order variations of quotient curvature can be deduced as follows.
\begin{proposition}\label{prop: linearization_of_quotient}
    Let $(M^n, \bg)$ be a closed $n$-dimensional Einstein manifold with $\mathrm{Ric}_{\bg} = (n-1) \lambda \bg$, the first variation of the quotient curvature $\sigma_k/\sigma_l$ at $ g=\bar{g}$ in the direction of a symmetric 2-tensor $h$ is given by
    \begin{equation}
        \left( \frac{\sigma_k}{\sigma_l} \right)'(\bg)
        = \frac{k-l}{n(n-1)\lambda} A_{kl} R'_{\bg}.
    \end{equation}
\end{proposition}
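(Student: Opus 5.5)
The plan is to differentiate the quotient directly by the quotient rule and then substitute the first variation formula \eqref{first sigma-k variation} for both $\sigma_k$ and $\sigma_l$. Since $\sigma_l(\bg)=\left(\frac{n-2}{2}\lambda\right)^l\binom{n}{l}$, it is a nonzero constant when $\lambda>0$ and $n\geq 3$. Hence the map $g\mapsto \sigma_k(g)/\sigma_l(g)$ is differentiable at $\bg$, and
\begin{equation*}
  \left(\frac{\sigma_k}{\sigma_l}\right)'(\bg)=\frac{\sigma_k'(\bg)}{\sigma_l(\bg)}-\frac{\sigma_k(\bg)\,\sigma_l'(\bg)}{\sigma_l(\bg)^2}.
\end{equation*}
If $l=0$, then $\sigma_0\equiv 1$ and $\sigma_0'=0$. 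This case is consistent with the general computation below, because the formula for $\sigma_l'$ then carries the factor $l=0$.

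Next I apply \eqref{first sigma-k variation} to each index:
\begin{equation*}
  \sigma_k'(\bg)=\frac{k}{n(n-1)\lambda}\sigma_k(\bg)R'_{\bg}, \qquad \sigma_l'(\bg)=\frac{l}{n(n-1)\lambda}\sigma_l(\bg)R'_{\bg}.
\end{equation*}
Substituting these, the first term becomes $\frac{k}{n(n-1)\lambda}A_{kl}R'_{\bg}$. In the second term one factor of $\sigma_l(\bg)$ cancels, leaving $\frac{l}{n(n-1)\lambda}A_{kl}R'_{\bg}$. Subtracting gives $\frac{k-l}{n(n-1)\lambda}A_{kl}R'_{\bg}$, which is the claim.

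The only step that needs real justification is \eqref{first sigma-k variation} itself, which the paper states without proof. If I had to verify it, I would use the generalized Kronecker delta expression
\begin{equation*}
  \sigma_k'=\frac{1}{(k-1)!}\,\delta^{j_1\cdots j_k}_{i_1\cdots i_k}\,(S^{i_1}_{j_1})'\,S^{i_2}_{j_2}\cdots S^{i_k}_{j_k}.
\end{equation*}
At $\bg$ we have $S=\frac{n-2}{2}\lambda\,\delta$, so the contraction rule collapses this to a constant multiple of $\mathrm{tr}(S^i_j)'$. One must remember that $(S^i_j)'$ includes the term $-h^{im}S_{mj}$; this contributes $-\frac{n-2}{2}\lambda\,\mathrm{tr}_{\bg} h$, which combines with $\mathrm{tr}_{\bg}S'$ to give $\frac{n-2}{2(n-1)}R'_{\bg}$. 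This bookkeeping is the main place where an error could enter, but since the formula is an earlier result, I may invoke it directly. The proposition then follows by the algebra above.
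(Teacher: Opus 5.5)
Your proposal is correct and follows the same route as the paper: apply the quotient rule at $\bg$ and substitute $\sigma_j'(\bg)=\frac{j}{n(n-1)\lambda}\sigma_j(\bg)R'_{\bg}$ for $j=k,l$, so the two terms combine to $\frac{k-l}{n(n-1)\lambda}A_{kl}R'_{\bg}$. Your side check of the $\sigma_k$-variation formula, including the $-h^{im}S_{mj}$ contribution to $(S^i_j)'$, is also correct.
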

\begin{proof}
    By direct calculation, we have
    \begin{equation*}
        \begin{split}
           \left( \frac{\sigma_k}{\sigma_l} \right)'(\bg)
            &= \frac{{\sigma_k}'(\bg)\sigma_l(\bg)-\sigma_k(\bg) {\sigma_l}'(\bg)}{\sigma^2_l(\bg)} 
            =\frac{k-l}{n(n-1)\lambda}\frac{\binom{n}{k}}{\binom{n}{l}}\left(\frac{n-2}{2}\lambda\right)^{k-l} R'_{\bg}.
        \end{split}
    \end{equation*}
    Combining with the notation of $A_{kl}$, the result follows.
\end{proof}
\begin{proposition}\label{prop: second variation of quotient curvature}
    Let $(M^n, \bg)$ be a closed $n$-dimensional Einstein manifold with $\mathrm{Ric}_{\bg} = (n-1) \lambda \bg$, the second order variation of the quotient curvature $\frac{\sigma_k(g)}{\sigma_l(g)}$ at $g=\bar{g}$ in the direction of a symmetric 2-tensor $h$ satisfies
    \begin{equation}
        \begin{split}
        &\left[ \frac{2A_{kl}(k-l)}{n(n-2) \lambda} \right]^{-1}  \left( \frac{\sigma_k}{\sigma_l} \right)''(\bg)\\
        =& tr_{\bg} S''_{\bg} 
        - \frac{2(k+l-1)}{(n-1) (n-2) \lambda} | S'_{\bg} |^2 
        - \frac{2( n - k - l) }{n-1} h^{ij} S'_{ij}(\bg)
        + \frac{(n-2)\left[n (k-l) - (n-2l) \right]}{2 n (n-1)^3 \lambda} (R'_{\bg})^2\\
        & + \frac{(n-2)(2n-k-l-1)}{2(n-1)} \lambda |h|^2,
        \end{split}
    \end{equation}
    where $S'_{\bg}$ and $S''_{\bg}$ denote the first and second order variations of Schouten tensor $S$ respectively.
\end{proposition}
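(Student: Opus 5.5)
The plan is to differentiate the quotient twice by the quotient rule, evaluate at $\bg$, and feed in the $\sigma_k$ formulas \eqref{first sigma-k variation} and \eqref{second sigma-k variation}. Write $f=\sigma_k/\sigma_l$. Then
\begin{equation*}
f''=\frac{\sigma_k''}{\sigma_l}-\frac{2\sigma_k'\sigma_l'}{\sigma_l^2}-\frac{\sigma_k\sigma_l''}{\sigma_l^2}+\frac{2\sigma_k(\sigma_l')^2}{\sigma_l^3}.
\end{equation*}
Evaluating at $\bg$ and factoring out $A_{kl}=\sigma_k(\bg)/\sigma_l(\bg)$ gives
\begin{equation*}
A_{kl}^{-1}f''=\frac{\sigma_k''}{\sigma_k}-\frac{\sigma_l''}{\sigma_l}-2\frac{\sigma_k'}{\sigma_k}\frac{\sigma_l'}{\sigma_l}+2\Big(\frac{\sigma_l'}{\sigma_l}\Big)^2 .
\end{equation*}
By \eqref{first sigma-k variation}, $\sigma_m'/\sigma_m=\frac{m}{n(n-1)\lambda}R'_{\bg}$ for each $m$. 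Hence the last two terms combine to
\begin{equation*}
\frac{2l(l-k)}{n^2(n-1)^2\lambda^2}(R'_{\bg})^2 .
\end{equation*}

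Next we subtract the two instances of \eqref{second sigma-k variation}. Each $\sigma_m''/\sigma_m$ equals $\frac{2m}{n(n-2)\lambda}$ times a bracket that is affine in $m$. Write the bracket as $B_0+(m-1)B_1-mB_2+\dots$. Then $\sigma_k''/\sigma_k-\sigma_l''/\sigma_l$ is a combination of the terms $(k-l)$, $(k^2-l^2)$ and $(k(k-1)-l(l-1))$. All of these are divisible by $k-l$, with quotients $1$, $k+l$ and $k+l-1$.

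We then match the coefficients term by term against the claimed formula:
\begin{itemize}
\item $tr_{\bg}S''_{\bg}$: coefficient $k-l$.
\item $|S'_{\bg}|^2$: from $m(m-1)$ we get $(k-l)(k+l-1)$.
\item $h^{ij}S'_{ij}$: from $m(n-m)$ we get $(k-l)(n-k-l)$.
\item $\lambda|h|^2$: from $m(2n-m-1)$ we get $(k-l)(2n-k-l-1)$.
\end{itemize}
After dividing by $\frac{2(k-l)}{n(n-2)\lambda}$, these reproduce the stated coefficients directly.

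The one genuine computation is the $(R'_{\bg})^2$ coefficient. It receives the $m(m-1)$ contribution $\frac{(k+l-1)(n-2)}{2(n-1)^3\lambda}$ from the difference of the $\sigma''$ terms. It also receives the quotient-rule term above, rescaled by $\frac{n(n-2)\lambda}{2(k-l)}$, which gives
\begin{equation*}
-\frac{l(n-2)}{n(n-1)^2\lambda}.
\end{equation*}
Putting both over the common denominator $2n(n-1)^3\lambda$ gives the numerator
\begin{equation*}
(n-2)\big[n(k+l-1)-2l(n-1)\big]=(n-2)\big[n(k-l)-(n-2l)\big],
\end{equation*}
which matches the statement. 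I expect this bookkeeping, and checking that no cross term was dropped in the quotient rule, to be the main place where errors could occur; everything else is linear in $m$ and immediate.
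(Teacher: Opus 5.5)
Your proposal is correct and follows the paper's proof essentially step for step. Both apply the quotient rule at $\bg$, use the $\sigma_k'$ formula to reduce the cross terms to $-\frac{(n-2)l}{n(n-1)^2\lambda}(R'_{\bg})^2$ after normalization, and take the difference of the two $\sigma''$ brackets for the remaining coefficients; your arithmetic, including the combined $(R'_{\bg})^2$ coefficient, checks out.
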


\begin{proof}
   An easy calculation shows that
    \begin{equation}
        \begin{split}
            \left( \frac{\sigma_k}{\sigma_l} \right)''(\bg)
             = \frac{{\sigma_k}''(\bg)}{\sigma_l(\bg)} - A_{kl} \frac{{\sigma_l}''(\bg)}{\sigma_l(\bg)} - \frac{2{\sigma_l}'(\bg) {\sigma_k}'(\bg)}{\sigma_l^2(\bg)} + \frac{2 \left( {\sigma_l}'(\bg) \right)^2 \sigma_k(\bg)}{\sigma_l^3(\bg)}.
        \end{split}
    \end{equation}
   Due to \ref{second sigma-k variation}, the first two terms of the right hand side of the above formula satisfy
    \begin{equation}\label{first two terms in D2}
        \begin{split}
            &\left[ \frac{2A_{kl}(k-l)}{n (n-2) \lambda} \right]^{-1} \left[ \frac{{\sigma_k}''(\bg)}{\sigma_l(\bg)} - A_{kl} \frac{{\sigma_l}''(\bg)}{\sigma_l(\bg)} \right]\\
            =& tr_{\bg} S''_{\bg} 
            - \frac{2(k+l-1)}{(n-1) (n-2) \lambda} |S'_{\bg}|^2 
            - \frac{2( n - k - l) }{n-1} h^{ij} S'_{ij}(\bg)
            + \frac{(n-2)(k+l-1)}{2(n-1)^3 \lambda} (R'_{\bg})^2\\
            & + \frac{(n-2)(2n-k-l-1)}{2(n-1)} \lambda |h|^2,
        \end{split}
    \end{equation}
    and the last two terms satisfy
    \begin{equation}\label{last two terms in D2}
        \begin{split}
            \left[ \frac{2A_{kl}(k-l)}{n (n-2) \lambda} \right]^{-1} \left[ - \frac{2{\sigma_l}'(\bg) {\sigma_k}'(\bg)}{\sigma_l^2(\bg)} + \frac{2 ({\sigma_l}'(\bg))^2 \sigma_k(\bg)}{\sigma_l^3(\bg)} \right]
            =& -\frac{(n-2)l}{n (n-1)^2 \lambda}  (R'_{\bg})^2.
        \end{split}
    \end{equation}
 This completes the proof.
\end{proof}

\section{The Key Functionals and Corresponding Variations}\label{sec:functional}

In this section, we assume that $(M^n, \bar{g})$ is a closed Einstein manifold satisfying
\begin{equation*}
  \mathrm{Ric}_{\bar{g}}=(n-1) \lambda \bar{g},
\end{equation*}
and define the key functional as
\begin{equation}\label{definition of functional}
    \mathcal{H}_{\bar{g}}(g) = \left[ \int_M \frac{\sigma_p(g)}{\sigma_q(g)} dv_g \right]^{2(k-l)} \left[ \int_M \frac{\sigma_k(g)}{\sigma_l(g)} dv_{\bar{g}} \right]^{n-2(p-q)},
\end{equation}
where $1\leq l<k \leq n$ and $1\leq p \leq q \leq n$. And we will derive the first and second order variational formulas for $\mathcal{H}_{\bar{g}}(g)$ at $g=\bg$. Here functionals with a fixed background volume form $dv_{\bar{g}}$, had been studied previously by Fisher-Marsden \cite{Fischer-Marsden} and Yuan \cite{Yuan-Lin-1, Yuan-Lin-2} on the rigidity of scalar curvature and $Q$-curvature, as well as the related volume comparison theorems. 

If we denote by 
\begin{equation}
    \alpha = 2(k-l) \ \ \text{and} \ \ \beta = n - 2(p-q),
\end{equation}
then we can rewrite the key functional as
\begin{equation}\label{definition of functional compact}
    \mathcal{H}_{\bar{g}}(g) = \left[ \int_M \frac{\sigma_p(g)}{\sigma_q(g)} dv_g \right]^{\alpha} \left[ \int_M \frac{\sigma_k(g)}{\sigma_l(g)} dv_{\bar{g}} \right]^{\beta}.
\end{equation}

And we also have
\begin{equation}
    \mathcal{H}_{\bg}(\bg) = A_{pq}^{\alpha} A_{kl}^{\beta} \mathrm{Vol}_{\bg}^{\alpha+\beta}.
\end{equation}

\subsection{First order variation formula}

We now turn our attention to the fundamental properties of the functional $\mathcal{H}_{\bar{g}}(g)$. A natural first inquiry concerns its behavior under basic transformations and the identification of its critical points. Throughout this subsection, we consider the variation formulas near the background Einstein metric $\bar{g}$.

\begin{proposition}\label{proposition:3.1}
    $\mathcal{H}_{\bar{g}}(g)$ is  scaling invariant with respect to metric $g$, i.e.
    \begin{align*}
      \mathcal{H}_{\bar{g}}(c^2 g) = \mathcal{H}_{\bar{g}}(g)
    \end{align*}
    holds for any non-zero constant $c$.
\end{proposition}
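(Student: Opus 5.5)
The plan is to track how each ingredient of $\mathcal{H}_{\bar{g}}$ scales under $g\mapsto c^2 g$, using only the pointwise definitions of $S_g$ and $\sigma_k(g)$, and then check that the exponents $\alpha=2(k-l)$ and $\beta=n-2(p-q)$ are chosen exactly so that the powers of $c$ cancel.

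First, the Schouten tensor. The Levi-Civita connection is unchanged under constant rescaling, so the $(1,3)$ curvature tensor and $\mathrm{Ric}_{c^2g}=\mathrm{Ric}_g$ are unchanged. Since $R_{c^2g}=c^{-2}R_g$, we get
\begin{equation*}
S_{c^2g}=\mathrm{Ric}_g-\frac{c^{-2}R_g}{2(n-1)}c^2g=S_g
\end{equation*}
as a $(0,2)$-tensor. Raising an index with $(c^2g)^{-1}$ then gives $(S_{c^2g})^i_j=c^{-2}S^i_j$. The eigenvalues of the endomorphism are therefore multiplied by $c^{-2}$, and by homogeneity of the elementary symmetric polynomials (or directly from the Kronecker delta formula) we obtain $\sigma_k(c^2g)=c^{-2k}\sigma_k(g)$. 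Consequently
\begin{equation*}
\frac{\sigma_k(c^2g)}{\sigma_l(c^2g)}=c^{-2(k-l)}\frac{\sigma_k(g)}{\sigma_l(g)}.
\end{equation*}

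Next, the two integrals.
\begin{itemize}
\item Since $dv_{c^2g}=|c|^n dv_g$, the first factor becomes $\int_M \frac{\sigma_p}{\sigma_q}\,dv_{c^2g}=c^{-2(p-q)}|c|^n\int_M\frac{\sigma_p(g)}{\sigma_q(g)}\,dv_g$.
\item The second integral is taken against the fixed background form $dv_{\bar{g}}$, so it only picks up $c^{-2(k-l)}$.
\end{itemize}
Writing everything in terms of $|c|$ (the even powers $c^{-2m}$ equal $|c|^{-2m}$), the functional scales as
\begin{equation*}
\mathcal{H}_{\bar{g}}(c^2g)=|c|^{\alpha(n-2(p-q))-\beta\cdot 2(k-l)}\mathcal{H}_{\bar{g}}(g)=|c|^{\alpha\beta-\beta\alpha}\mathcal{H}_{\bar{g}}(g)=\mathcal{H}_{\bar{g}}(g).
\end{equation*}

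The only point needing care is the bookkeeping of which factor carries the volume weight, together with the sign of $c$. The sign is harmless because only $c^2$ and $|c|^n$ appear. Real powers of possibly negative integrals are also unproblematic, since the same positive factor $|c|^{\cdot}$ is pulled out of each bracket before raising to the powers $\alpha,\beta$. There is no genuine obstacle.
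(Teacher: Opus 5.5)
Your proof is correct and follows the same route as the paper: both use $\sigma_k(c^2g)=c^{-2k}\sigma_k(g)$ and the scaling of the volume form to show that the first bracket picks up $|c|^{\beta}$ and the second (fixed-background) bracket picks up $|c|^{-\alpha}$, so the powers cancel in $\mathcal{H}_{\bar g}$. Your version is slightly more careful than the paper's: you derive the $\sigma_k$ scaling from $S_{c^2g}=S_g$, and you write $dv_{c^2g}=|c|^n\,dv_g$ rather than $c^n\,dv_g$, which matters for negative $c$ in odd dimensions, although the even exponent $\alpha$ makes the paper's sloppiness harmless.
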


\begin{proof} 
Recall that under a metric scaling $g \mapsto c^2 g$, the $k$-th elementary symmetric function of the eigenvalues of the Schouten tensor transforms as 
$$\sigma_k(c^2 g) = c^{-2k} \sigma_k(g),$$ 
and the volume form scales as 
$$dv_{c^2 g} = c^n dv_g,$$
where $n$ is the dimension of the manifold $M$. Then
\begin{align*}
    \left[ \int_M \frac{\sigma_p(c^2 g)}{\sigma_q(c^2 g)} dv_{c^2 g} \right]^{\alpha} = \left[ c^{\beta} \int_M \frac{\sigma_p(g)}{\sigma_q(g)} dv_{g} \right]^{\alpha} \ \ \text{and} \ \ \left[ \int_M \frac{\sigma_k(c^2 g)}{\sigma_l(c^2 g)} dv_{\bg} \right]^{\beta} = \left[ c^{- \alpha} \int_M \frac{\sigma_k(g)}{\sigma_l(g)} dv_{\bg} \right]^{\beta}.
\end{align*}
So the result follows.
\end{proof}

This property implies that the critical points of the key functional $\mathcal{H}_{\bar{g}}$ should be achieved by metrics whose variations preserve certain geometric scales, particularly those related to curvature. To analyze such critical points, one must understand how the the quotient curvature responds to the infinitesimal changes in the metric. The subsequent proposition provides this analysis by computing the linearization of these quotient curvature at an Einstein metric. This variation, expressed purely in terms of the linearization of scalar curvature, reflects the constrained nature of geometric deformations on Einstein manifolds.

Next, we establish the variational characterization of the background metric. The following proposition confirms that the Einstein metric $\bg$ is a critical point of the functional $\mathcal{H}_{\bg}$. This result follows from the first order variation formula and exploits the fact that, on an Einstein manifold, the integral of the linearized scalar curvature is proportional to the integral of the trace of the metric perturbation.

\begin{proposition}\label{proposition:einstein_critical}
  The Einstein metric $\bar{g}$ is the critical point of $\mathcal{H}_{\bar{g}}$, i.e.
  \begin{align*}
      D \mathcal{H}_{\bg} (\bg)\cdot h = 0.
  \end{align*}
  for any $h\in S_2(M)$.
\end{proposition}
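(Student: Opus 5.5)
We compute $D\mathcal{H}_{\bg}(\bg)\cdot h$ directly from the product rule applied to \eqref{definition of functional compact}, and check that the two resulting contributions cancel. Write
\[
F(g)=\int_M \frac{\sigma_p(g)}{\sigma_q(g)}\,dv_g,\qquad G(g)=\int_M \frac{\sigma_k(g)}{\sigma_l(g)}\,dv_{\bg},
\]
so that $\mathcal{H}_{\bg}=F^{\alpha}G^{\beta}$. At $\bg$ we have $F(\bg)=A_{pq}\Vol_{\bg}$ and $G(\bg)=A_{kl}\Vol_{\bg}$. The logarithmic derivative is therefore
\[
\frac{D\mathcal{H}_{\bg}(\bg)\cdot h}{\mathcal{H}_{\bg}(\bg)}=\alpha\frac{F'}{A_{pq}\Vol_{\bg}}+\beta\frac{G'}{A_{kl}\Vol_{\bg}},
\]
and it suffices to show this vanishes.

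\emph{Computing $F'$.} Differentiate under the integral, using Proposition \ref{prop: linearization_of_quotient} with $(k,l)$ replaced by $(p,q)$ and the volume variation \eqref{first volume variation}:
\[
F'=\int_M\Big[\frac{p-q}{n(n-1)\lambda}A_{pq}R'_{\bg}+\frac12 A_{pq}\,tr_{\bg}h\Big]dv_{\bg}.
\]
To integrate $R'_{\bg}$, note that $\int_M\Delta_{\bg}(tr_{\bg}h)\,dv_{\bg}=0$ and $\int_M\delta^2_{\bg}h\,dv_{\bg}=0$ on a closed manifold. Hence
\[
\int_M R'_{\bg}\,dv_{\bg}=-(n-1)\lambda\int_M tr_{\bg}h\,dv_{\bg}.
\]
Substituting, $F'=A_{pq}\big(\frac12-\frac{p-q}{n}\big)\int_M tr_{\bg}h=\frac{A_{pq}\,\beta}{2n}\int_M tr_{\bg}h$. (If $p=q$ the quotient is $1$ and the formula still holds.)

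\emph{Computing $G'$.} Here the volume form is fixed, so only the curvature varies. Again by Proposition \ref{prop: linearization_of_quotient},
\[
G'=\frac{k-l}{n(n-1)\lambda}A_{kl}\int_M R'_{\bg}=-\frac{k-l}{n}A_{kl}\int_M tr_{\bg}h=-\frac{A_{kl}\,\alpha}{2n}\int_M tr_{\bg}h.
\]

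\emph{Conclusion.} Plugging both into the logarithmic derivative gives
\[
\frac{1}{2n\Vol_{\bg}}\big(\alpha\beta-\beta\alpha\big)\int_M tr_{\bg}h=0.
\]
For this to be legitimate we need $\mathcal{H}_{\bg}(\bg)\neq0$ and the powers to be differentiable. Since $\lambda>0$, both $A_{pq}$ and $A_{kl}$ are positive, so $F$ and $G$ are positive near $\bg$ and the powers are smooth there. Otherwise the argument is routine.

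The only real subtlety is the exact weighting: the exponents $\alpha=2(k-l)$ and $\beta=n-2(p-q)$ are chosen precisely so that the two trace contributions cancel, which is consistent with the scaling invariance in Proposition \ref{proposition:3.1}. In fact, scaling invariance alone already gives vanishing along $h=\bg$. The content of the computation is that every variation reduces to its $\int_M tr_{\bg}h$ part, because the divergence terms integrate out.
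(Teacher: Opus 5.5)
Your proposal is correct and follows the paper's own proof: you apply the product rule to $F^{\alpha}G^{\beta}$, use \Cref{prop: linearization_of_quotient} together with $\int_M R'_{\bg}\,dv_{\bg}=-(n-1)\lambda\int_M tr_{\bg}h\,dv_{\bg}$ to reduce both derivatives to multiples of $\int_M tr_{\bg}h\,dv_{\bg}$, and then observe the cancellation $\alpha\beta/(2n)-\beta\alpha/(2n)=0$. The one cosmetic difference is that you divide by $\mathcal{H}_{\bg}(\bg)>0$ (which you correctly justify from $\lambda>0$), whereas the paper leaves the product-rule expression undivided.
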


\begin{proof}
  Let $g_t=\bar{g}+th$, where $h\in S_2(M)$, $t\in(-\varepsilon,\varepsilon)$ for some small $\varepsilon>0$. Then the linearization of the functional $\mathcal{H}_{\bar{g}}$ at $g=\bar{g}$ is
  \begin{equation}\label{eq: first_variation}
      \begin{split}
          D\mathcal{H}_{\bar{g}}(\bar{g})\cdot h 
          =& \alpha \left[ \quot{p}{q}{\bar{g}}{\bar{g}} \right]^{\alpha-1} \left[ \quot{k}{l}{\bar{g}}{\bg} \right]^{\beta} D \left[ \quot{p}{q}{g}{g} \right] (\bar{g})\cdot h\\
          &+ \beta \left[ \quot{p}{q}{\bar{g}}{\bar{g}} \right]^{\alpha} \left[ \quot{k}{l}{\bar{g}}{\bg} \right]^{\beta-1} D \left[ \quot{k}{l}{g}{\bar{g}} \right](\bar{g})\cdot h
      \end{split}
  \end{equation}
Note that for any $k(1 \leq k \leq n)$, there holds
\begin{align*}
      \sigma_k(\bg)=\bigg(\frac{n-2}{2}\lambda\bigg)^k{\binom{n}{k}},  \ \ {\sigma_k}'(\bg)=\frac{k}{n(n-1)\lambda} \sigma_k(\bg)  R'_{\bg},
  \end{align*}
And integration by parts yields 
\begin{align*}
    \int_MR'_{\bg}dv_{\bg}
    =\int_M[-\Delta_{\bg}tr_{\bg}h+\delta^2_{\bg} h- (n-1) \lambda tr_{\bg} h]dv_{\bg}
    =-(n-1)\lambda \int_M tr_{\bg}h dv_{\bg}.
\end{align*}
Combining this with \Cref{prop: linearization_of_quotient}, it follows 
\begin{equation}\label{eq:1st_differential_of_fix_vol_int}
    D \left[ \quot{k}{l}{g}{\bg} \right](\bar{g})\cdot h = \frac{A_{kl} (k-l)}{n(n-1)\lambda}  \int_M R'_{\bg} dv_{\bg} = - \frac{A_{kl} (k-l)}{n} \int_M tr_{\bg} h dv_{\bg},
\end{equation}
and
\begin{equation}\label{eq:1st_differential_of_int}
    \begin{split}
        D \left[ \quot{p}{q}{g}{g} \right](\bar{g})\cdot h 
        &= - \frac{A_{pq} (p-q)}{n} \int_M tr_{\bg} h dv_{\bg} +  \frac{A_{pq}}{2} \int_M tr_{\bg} h dv_{\bg} \\
        &= \frac{A_{pq} \left[ n - 2(p-q) \right]}{2n} \int_M tr_{\bg} h dv_{\bg}.
    \end{split}
\end{equation}
Therefore, $D\mathcal{H}_{\bg}(\bg)\cdot h = 0$, i.e. $\bg$ is a critical point of $\mathcal{H}_{\bar{g}}$.
\end{proof}

\subsection{Second order variation formula}
In this subsection, we derive the second order variational formula for the quotient curvature functional. To facilitate the computation of the spectrum, we adopt the standard decomposition of the symmetric 2-tensor $h$ into its trace-free transverse (TT) part and its pure trace part. Specifically, we assume the symmetric 2-tensor $h$ admits the TT-gauge decomposition:
\begin{equation}
    h = \oc{h} + \frac{1}{n} (tr_{\bg}h) \bg \in S_{2,\bg}^{TT}(M) \oplus \left( C^{\infty}(M) \cdot \bg\right)
\end{equation}
where $\oc{h} \in S_{2,\bg}^{TT}(M)$ satisfies $tr_{\bg} \oc{h} = 0$ and $\delta_{\bg} \oc{h} = 0$. Under this decomposition, the divergence of $h$ satisfies
\begin{equation}
    \delta_{\bg}(h) = - \frac{1}{n} \nabla_{\bg}(tr_{\bg} h).
\end{equation}
And we also have the following formula
\begin{equation}
    \int_M |h|^2dv_{\bg} = \int_M|\overset{\circ}{h}|^2dv_{\bg} +\frac{1}{n}\int_M(tr_{\bg}h)^2dv_{\bg}
\end{equation}
for the integral decomposition. We recall the following integration identities for the Schouten tensor on Einstein manifolds, as derived in \cite{Chen-sigma-k}.
\begin{proposition}\label{integration formulas}
Let $(M^n, \bg)$ be a closed $n$-dimensional Einstein manifold with $\mathrm{Ric}_{\bg} = (n-1) \lambda \bg$, we have the following integration formulas:
\begin{align*}
    \int_M \ tr_{\bg} S''_{\bg}dv_{\bg}
    =&-\frac{3n-2}{4(n-1)} \int_M \oc{h} \cdot \Delta^{\bg}_E \oc{h} dv_{\bg}
    + \frac{n-2}{2} \lambda \int_M| \oc{h}|^2 dv_{\bg}
    -\frac{(n-2)^2}{4n^2} \int_M |\nabla_{\bg} tr_{\bg}h|^2dv_{\bg};\\
    \int_M |S'_{\bg}|^2 dv_{\bg}
    =& \frac{1}{4} \int_M |\Delta^{\bg}_E \oc{h}|^2 dv_{\bg}
    -\frac{n-2}{2} \lambda \int_M \oc{h} \cdot \Delta^{\bg}_E \oc{h} dv_{\bg}
    +\frac{(n-2)^2}{4} \lambda^2 \int_M |\oc{h}|^2 dv_{\bg}\\
    &+ \frac{(n-2)^2}{4n^2} \int_M (\Delta_{\bg}tr_{\bg}h)^2 dv_{\bg}
    -\frac{(n-1)(n-2)^2}{4n^2} \lambda \int_M |\nabla_{\bg}tr_{\bg}h|^2 dv_{\bg};\\
    \int_M h^{ij} S'_{ij}(\bg) dv_{\bg}
    =&-\frac{1}{2} \int_M \oc{h} \cdot \Delta^{\bg}_E \oc{h} dv_{\bg}
    +\frac{n-2}{2} \lambda\int_M |\oc{h}|^2dv_{\bg}
    +\frac{n-2}{2n^2} \int_M |\nabla_{\bg}tr_{\bg}h|^2 dv_{\bg};\\
    \int_M (R'_{\bg})^2dv_{\bg}
    =& (n-1)^2 \left[\frac{1}{n^2} \int_M (\Delta_{\bg} tr_{\bg}h)^2dv_{\bg}
    -\frac{2}{n} \lambda \int_M |\nabla_{\bg} tr_{\bg}h|^2 dv_{\bg}
    +\lambda^2 \int_M (tr_{\bg}h)^2dv_{\bg} \right];
\end{align*}
where $\Delta^{\bg}_E=\Delta_{\bg}+2Rm_{\bg}$ denotes the Einstein operator acting on symmetric 2-tensors.
\end{proposition}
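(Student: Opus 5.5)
The plan is to compute each integrand directly from the linearization of the Ricci tensor at $\bg$, after splitting $h=\oc{h}+\frac{f}{n}\bg$ with $f=tr_{\bg}h$ and $\oc{h}\in S^{TT}_{2,\bg}(M)$. Every quantity is linear or quadratic in $h$, so it splits into a TT part, a trace part, and cross terms. The cross terms vanish after integration because $\oc{h}$ is divergence free and trace free. For instance, $\int_M \oc{h}\cdot\nabla^2_{\bg}f\,dv_{\bg}=\int_M f\,\delta^2_{\bg}\oc{h}\,dv_{\bg}=0$. Moreover, on an Einstein manifold $\Delta^{\bg}_E$ preserves the TT space, so terms such as $\int_M \Delta^{\bg}_E\oc{h}\cdot\nabla^2 f\,dv_{\bg}$ vanish for the same reason.

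\textbf{First-order quantities.} I use the standard formula
\[
\Ric'=-\tfrac12\Delta_L h-\tfrac12\nabla^2 tr h+\tfrac12(\nabla\delta h+(\nabla\delta h)^T).
\]
At $\bg$ it gives $\Ric'(\oc{h})=-\frac12\Delta^{\bg}_E\oc{h}+(n-1)\lambda\oc{h}$. For the trace part it gives $\Ric'(\frac fn\bg)=-\frac{n-2}{2n}\nabla^2 f-\frac1{2n}(\Delta f)\bg$. From the formula for $R'_{\bg}$ stated earlier, $R'_{\bg}=-\frac{n-1}{n}\Delta f-(n-1)\lambda f$, since $\delta^2(\frac fn\bg)=\frac1n\Delta f$.

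\textbf{Linearized Schouten tensor.} Next I use
\[
S'=\Ric'-\frac{R'}{2(n-1)}\bg-\frac{R_{\bg}}{2(n-1)}h,
\qquad R_{\bg}=n(n-1)\lambda .
\]
This yields
\[
S'=-\tfrac12\Delta^{\bg}_E\oc{h}+\tfrac{n-2}{2}\lambda\oc{h}-\tfrac{n-2}{2n}\nabla^2 f+(\text{terms in }f\bg).
\]
By the explicit formulas above, the $f\bg$ terms cancel: $-\frac1{2n}\Delta f+\frac1{2n}\Delta f+\frac{\lambda}2 f-\frac\lambda2 f=0$. Hence $S'=-\frac12\Delta^{\bg}_E\oc{h}+\frac{n-2}2\lambda\oc{h}-\frac{n-2}{2n}\nabla^2f$.

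The remaining first-order integrals then follow:
\begin{itemize}
\item $\int_M|S'_{\bg}|^2\,dv_{\bg}$ comes from expanding the square and applying the Bochner identity $\int_M|\nabla^2 f|^2\,dv_{\bg}=\int_M(\Delta f)^2\,dv_{\bg}-(n-1)\lambda\int_M|\nabla f|^2\,dv_{\bg}$.
\item $\int_M h^{ij}S'_{ij}\,dv_{\bg}$ comes from pairing with $\oc{h}+\frac fn\bg$ and using $\int_M f\Delta f\,dv_{\bg}=-\int_M|\nabla f|^2\,dv_{\bg}$.
\item $\int_M(R'_{\bg})^2\,dv_{\bg}$ is immediate from the formula for $R'_{\bg}$.
\end{itemize}

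\textbf{Second-order term (main obstacle).} I write $tr_{\bg}S''=tr_{\bg}\Ric''-\frac{n}{2(n-1)}R''$. Differentiating $R=g^{ij}R_{ij}$ twice gives
\[
R''=tr\Ric''-2h\cdot\Ric'+2(h\circ h)\cdot\Ric .
\]
Therefore
\[
tr_{\bg}S''=-\tfrac{n-2}{2(n-1)}R''-\tfrac{n}{n-1}h\cdot\Ric'+\tfrac{n}{n-1}(n-1)\lambda|h|^2 .
\]
The last two terms are handled with the first-order computations above. For $\int_M R''\,dv_{\bg}$, I would use the known second variation of the total scalar curvature $\int R_g\,dv_g$ at an Einstein metric: the TT part contributes $\frac12\int\oc{h}\cdot\Delta_E\oc{h}$ plus a $\lambda|\oc{h}|^2$ term, and the conformal part contributes a $|\nabla f|^2$ term. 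I would then subtract the contributions of the variation of $dv_g$, namely $\int R'\,tr h$, $R\,(dv)''$, and so on, to isolate $\int_M R''\,dv_{\bg}$.

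This step is where mistakes are likely. It involves quadratic cross terms between $\oc{h}$ and $f$ that do not vanish pointwise, and I must check that each one integrates to zero via $\delta\oc{h}=0$ and $tr\,\oc{h}=0$. The bookkeeping is long, so I would verify the final coefficients $-\frac{3n-2}{4(n-1)}$, $\frac{n-2}2\lambda$, $-\frac{(n-2)^2}{4n^2}$ on two test cases: pure conformal $h=\frac fn\bg$, and TT $h$ on the round sphere.
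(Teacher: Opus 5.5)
Your first-order computations are correct. You get $S'_{\bg}=-\frac12\Delta_E^{\bg}\oc{h}+\frac{n-2}{2}\lambda\oc{h}-\frac{n-2}{2n}\nabla^2 f$ (with $f=tr_{\bg}h$), and the identities for $\int_M|S'_{\bg}|^2$, $\int_M h^{ij}S'_{ij}$ and $\int_M(R'_{\bg})^2$ follow exactly as you describe. The paper itself gives no proof; it quotes all four identities from \cite{Chen-sigma-k}.

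The gap is the $tr_{\bg}S''_{\bg}$ identity, the only nontrivial one, and your setup for it is wrong. Along $g_t=\bg+th$ one has $\frac{d^2}{dt^2}(R_{g_t}g_t)=R''\bg+2R'h$, so
\[
tr_{\bg}S''_{\bg}=tr_{\bg}\Ric''-\frac{n}{2(n-1)}R''-\frac{1}{n-1}R'_{\bg}\,tr_{\bg}h .
\]
You dropped the last term, which integrates to $-\frac1n\int_M|\nabla f|^2dv_{\bg}+\lambda\int_M f^2dv_{\bg}$. Moreover, inserting $tr_{\bg}\Ric''=R''+2h\cdot\Ric'-2(n-1)\lambda|h|^2$ into your own truncated expression gives $\frac{n-2}{2(n-1)}R''+2h\cdot\Ric'-2(n-1)\lambda|h|^2$, not the formula you display. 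With your display, the coefficient of $\int_M\oc{h}\cdot\Delta_E^{\bg}\oc{h}\,dv_{\bg}$ comes out as $\frac{n+2}{4(n-1)}$ instead of $-\frac{3n-2}{4(n-1)}$. Finally, $\int_M R''\,dv_{\bg}$, the real content of this step, is only described, never computed. Deferring to test cases does not close this gap, though your constant-$f$ test would flag the error: $S_{c\bg}=S_{\bg}$ forces the $f^2$ terms to cancel, and with your formulas they do not.

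A clean repair uses $tr_gS_g=\sigma_1(g)=\frac{n-2}{2(n-1)}R_g$. Differentiating $g^{ij}S_{ij}$ twice gives
\[
tr_{\bg}S''_{\bg}=\frac{n-2}{2(n-1)}R''+2h^{ij}S'_{ij}(\bg)-(n-2)\lambda|h|^2,
\]
which reuses your computation of $\int_M h^{ij}S'_{ij}$. For $\int_M R''$, differentiate $R'_g(h)=-\Delta_g tr_gh+\delta_g^2h-\langle\Ric_g,h\rangle_g$ once more along $g_t$. Since $\int_M \mathrm{div}_{g_t}X_t\,dv_{g_t}=0$, the $t$-derivative of each divergence term integrates to $-\frac12\int_M\mathrm{div}_{\bg}(X_0)\,tr_{\bg}h\,dv_{\bg}$. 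This yields
\begin{align*}
\int_M R''\,dv_{\bg}={}&\frac12\int_M\oc{h}\cdot\Delta_E^{\bg}\oc{h}\,dv_{\bg}+(n-1)\lambda\int_M|\oc{h}|^2dv_{\bg}\\
&-\frac{(n+2)(n-1)}{2n^2}\int_M|\nabla f|^2dv_{\bg}+\frac{2(n-1)\lambda}{n}\int_M f^2dv_{\bg}.
\end{align*}
Combining the two displays, the $f^2$ terms cancel, and the coefficients $-\frac{3n-2}{4(n-1)}$, $\frac{n-2}{2}\lambda$ and $-\frac{(n-2)^2}{4n^2}$ come out exactly.
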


Combining with \Cref{prop: second variation of quotient curvature,integration formulas}, we obtain the second order variation formula for the key functional $\mathcal{H}_{\bg}$.

\begin{proposition}\label{prop: second order variation of functional}
    Let $(M^n, \bg)$ be a closed $n$-dimensional Einstein manifold with $\mathrm{Ric}_{\bg} = (n-1) \lambda \bg$, and
    \begin{equation*}
        h = \mathring{h} + \frac{tr_{\bg}h}{n} \bg \in S^{TT}_{2,\bg}(M) \oplus(C^\infty(M) \cdot\bg),
    \end{equation*}
   then the second order variation formula for $\mathcal{H}_{\bg}$ at $\bar{g}$ is
    \begin{equation*}
        \begin{split}
            &\left[ \mathcal{H}_{\bg}(\bg) \right]^{-1} \mathrm{Vol}_{\bg} D^2 \mathcal{H}_{\bg}(\bar{g}) (h, h)\\
            =& -\frac{\alpha\left[ \beta(k+l) + 2(p^2 - q^2) -n \right]}{2n(n-1)(n-2)^2 \lambda^2} \int_M | \Delta_{E}^{\bg} \oc{h}|^2 dv_{\bg} + \frac{\alpha}{4(n-1)\lambda} \int_M \oc{h} \cdot \Delta^{\bg}_{E} \oc{h} dv_{\bg}\\
            &- \frac{\alpha \left[ 2(p-q)(q-l)+nl\right]}{n^4 \lambda^2} \int_M \left[(\Delta_{\bg} tr_{\bg} h)^2 - n \lambda |\nabla_{\bg} tr_{\bg} h|^2 \right] dv_{\bg}\\
            & - \frac{\alpha\beta(\alpha+\beta)}{4n^3 \lambda} \int_M \left[|\nabla_{\bg} tr_{\bg} h|^2 - n\lambda \left( tr_{\bg} h - \overline{tr_{\bg} h} \right)^2 \right]dv_{\bg},
        \end{split}        
    \end{equation*}
    where $ \overline{tr_{\bg}h}:= \mathrm{Vol}_{\bg}^{-1} \int_M tr_{\bg}hdv_{\bg}$ denotes the average of $tr_{\bg}h$ on $M^n$, and
        $\alpha = 2(k-l)$, $\beta = n - 2(p-q)$.
  
\end{proposition}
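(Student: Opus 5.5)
We will compute $D^2\mathcal{H}_{\bg}(\bg)(h,h)$ along $g_t=\bg+th$ by the product rule applied to $\mathcal{H}_{\bg}=F^{\alpha}G^{\beta}$, where
\[
F(g)=\int_M \frac{\sigma_p(g)}{\sigma_q(g)}\,dv_g, \qquad G(g)=\int_M\frac{\sigma_k(g)}{\sigma_l(g)}\,dv_{\bg},
\]
with $F(\bg)=A_{pq}\mathrm{Vol}_{\bg}$ and $G(\bg)=A_{kl}\mathrm{Vol}_{\bg}$. Writing $F'$, $G'$, $F''$, $G''$ for first and second variations, the normalization gives
\[
\mathcal{H}^{-1}D^2\mathcal{H}=\alpha\frac{F''}{F}+\beta\frac{G''}{G}+\alpha(\alpha-1)\Big(\frac{F'}{F}\Big)^2+\beta(\beta-1)\Big(\frac{G'}{G}\Big)^2+2\alpha\beta\frac{F'G'}{FG}.
\]
From \eqref{eq:1st_differential_of_fix_vol_int} and \eqref{eq:1st_differential_of_int} we have $F'/F=\frac{\beta}{2n}\overline{tr_{\bg}h}$ and $G'/G=-\frac{\alpha}{2n}\overline{tr_{\bg}h}$. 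Hence the three quadratic first-order terms combine to
\[
\frac{\alpha\beta}{4n^2}\big[(\alpha-1)\beta+(\beta-1)\alpha-2\alpha\beta\big]\big(\overline{tr_{\bg}h}\big)^2=-\frac{\alpha\beta(\alpha+\beta)}{4n^2}\big(\overline{tr_{\bg}h}\big)^2 .
\]
Multiplied by $\mathrm{Vol}_{\bg}$, this is exactly the piece that converts $\int(tr_{\bg}h)^2$ into $\int(tr_{\bg}h-\overline{tr_{\bg}h})^2$ in the last line of the claimed formula.

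Next we expand the second derivatives. For $G''$ we have $G''=\int(\sigma_k/\sigma_l)''\,dv_{\bg}$, which we compute by integrating \Cref{prop: second variation of quotient curvature} and substituting the four identities of \Cref{integration formulas}. For $F''$ we write
\[
F''=\int\Big[(\sigma_p/\sigma_q)''+2(\sigma_p/\sigma_q)'\,\tfrac12 tr_{\bg}h+A_{pq}\,(dv)''\Big]dv_{\bg},
\]
where the pointwise volume-form second variation is $\tfrac14(tr h)^2-\tfrac12|h|^2$. We then use \Cref{prop: linearization_of_quotient} to write $(\sigma_p/\sigma_q)'=\frac{p-q}{n(n-1)\lambda}A_{pq}R'_{\bg}$. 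The cross term $\int R'_{\bg}\,tr_{\bg}h$ is evaluated via the TT decomposition: $\delta^2_{\bg}h=\frac1n\Delta\,tr h$ (up to sign conventions), so
\[
R'_{\bg}=-\tfrac{n-1}{n}\Delta\,tr h-(n-1)\lambda\,tr h,
\]
and therefore
\[
\int R'\,tr h=\tfrac{n-1}{n}\int|\nabla tr h|^2-(n-1)\lambda\int(tr h)^2 .
\]
We also use $\int|h|^2=\int|\oc h|^2+\frac1n\int(tr h)^2$.

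Finally we group all terms by type: $\int|\Delta_E\oc h|^2$, $\int\oc h\cdot\Delta_E\oc h$, $\int|\oc h|^2$, $\int(\Delta\,tr h)^2$, $\int|\nabla tr h|^2$, and $\int(tr h)^2$. The $\oc h$-parts come only from $tr S''$, $|S'|^2$, $h\cdot S'$, and $|h|^2$ in each quotient's second variation, together with $-\frac12|\oc h|^2$ from the volume. We must check that the $\int|\oc h|^2$ coefficient cancels entirely; this is forced by the scaling invariance of \Cref{proposition:3.1}, which serves as a consistency check, along with the critical-point structure. The trace parts should reorganize into the combinations $(\Delta\,tr h)^2-n\lambda|\nabla tr h|^2$ and $|\nabla tr h|^2-n\lambda(tr h)^2$. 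This regrouping is natural because both combinations vanish when $tr h$ is constant or is a first eigenfunction ($\Delta u=-n\lambda u$, the conformal Killing directions). That observation lets us solve for the coefficients by matching only two of the three trace coefficients and using the third as a check.

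The main obstacle is bookkeeping. After factoring out $\frac{2A(k-l)}{n(n-2)\lambda}$, the coefficients depend on $k+l$, $p+q$, $p-q$, and $n$, and they must collapse to the stated forms $\beta(k+l)+2(p^2-q^2)-n$ and $2(p-q)(q-l)+nl$. I would first treat the $|\Delta_E\oc h|^2$ coefficient, which receives contributions only from $-\frac{2(k+l-1)}{(n-1)(n-2)\lambda}|S'|^2$-type terms weighted by $\beta$ and by $\alpha(p-q)/(k-l)$. I would then verify the trace coefficients using test cases: $p=q$ (where $F$ is the volume, recovering \cite{Chen-sigma-k}) and $h=c\bg$, where $D^2\mathcal{H}$ must vanish by scaling.
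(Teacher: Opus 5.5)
Your plan is essentially the paper's proof: your product-rule expansion is the paper's split $D^2\mathcal{H}_{\bg}(\bg)(h,h)=J_1+\dots+J_5$, with $J_1+J_2+J_4$ collapsing to $-\frac{\alpha\beta(\alpha+\beta)}{4n^2}\mathrm{Vol}_{\bg}^{-1}\big(\int_M tr_{\bg}h\,dv_{\bg}\big)^2$ exactly as you found, and your treatment of $F''$, $G''$, the $R'_{\bg}\,tr_{\bg}h$ cross term and the volume term produces the paper's constants $C_1,\dots,C_5$, which are then substituted into the Schouten-tensor integration formulas and regrouped. One correction to your sanity check: scaling invariance only gives $D^2\mathcal{H}_{\bg}(\bg)(\bg,\cdot)=0$, which constrains the constant-trace direction but not the TT block, so it does not force the disappearance of $\int_M|\oc{h}|^2dv_{\bg}$; that disappearance is a direct algebraic cancellation among the $C_1,C_2,C_3,C_5$ and $\mathrm{Vol}''$ contributions, and it does hold when you carry out the bookkeeping.
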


\begin{proof}
 Based on the first order variation formula we derived in \Cref{eq: first_variation}, the second order variation of the functional $\mathcal{H}_{\bar{g}}$ is
    \begin{align*}
        &D^2 \mathcal{H}_{\bg}(\bar{g}) (h, h)\\
        =& \alpha (\alpha-1) \left[ \quot{p}{q}{\bar{g}}{\bar{g}} \right]^{\alpha-2} \left[ \quot{k}{l}{\bar{g}}{\bg} \right]^{\beta} \left\{ D \left[ \quot{p}{q}{g}{g} \right](\bar{g})\cdot h \right\}^2\\
        &+ 2 \alpha \beta \left[ \quot{p}{q}{\bar{g}}{\bar{g}} \right]^{\alpha-1} \left[ \quot{k}{l}{\bar{g}}{\bg} \right]^{\beta-1}\\
        &\times\left\{ D \left[ \quot{p}{q}{g}{g} \right] (\bar{g})\cdot h \right\} \left\{D \left[ \quot{k}{l}{g}{\bg} \right](\bar{g})\cdot h\right\} \\
        &+ \alpha \left[ \quot{p}{q}{\bar{g}}{\bar{g}} \right]^{\alpha-1} \left[ \quot{k}{l}{\bar{g}}{\bg} \right]^{\beta} D^2 \left[ \quot{p}{q}{g}{g} \right](\bar{g})(h, h) \\
        &+ \beta (\beta -1)\left[ \quot{p}{q}{\bar{g}}{\bar{g}} \right]^{\alpha} \left[ \quot{k}{l}{\bar{g}}{\bg} \right]^{\beta-2} \left\{ D \left[ \quot{k}{l}{g}{\bg}\right](\bar{g})\cdot h  \right\}^2\\
        &+ \beta \left[ \quot{p}{q}{\bar{g}}{\bar{g}} \right]^{\alpha} \left[ \quot{k}{l}{\bar{g}}{\bg} \right]^{\beta-1} D^2 \left[ \quot{k}{l}{g}{\bg} \right](\bar{g})(h, h)\\
        =:& J_1 + J_2 + J_3 + J_4 + J_5.
    \end{align*}
    Combining the above formula with \Cref{eq:1st_differential_of_fix_vol_int,eq:1st_differential_of_int}, we have 
    \begin{align*}
        J_1 &= \frac{\alpha (\alpha - 1 ) \beta^2}{4n^2} \mathcal{H}_{\bg}(\bg) \mathrm{Vol}_{\bg}^{-2} \left[\int_M tr_{\bg} h dv_{\bg} \right]^2;\\
        J_2 &= -\frac{\alpha^2 \beta^2}{2n^2} \mathcal{H}_{\bg}(\bg) \mathrm{Vol}_{\bg}^{-2} \left[\int_M tr_{\bg} h dv_{\bg} \right]^2;\\
        J_4 &= \frac{\alpha^2 \beta (\beta- 1)}{4n^2} \mathcal{H}_{\bg}(\bg) \mathrm{Vol}_{\bg}^{-2} \left[\int_M tr_{\bg} h dv_{\bg} \right]^2,
    \end{align*}
    and 
    \begin{align*}
        \left[ \mathcal{H}_{\bg}(\bg) \right]^{-1} \mathrm{Vol}_{\bg} \left[J_1 + J_2 + J_4\right] = -\frac{\alpha \beta (\alpha+\beta)}{4n^2}  \mathrm{Vol}_{\bg}^{-1} \left[\int_M tr_{\bg} h dv_{\bg} \right]^2.
    \end{align*}
    On the other hand,
    \begin{align*}
        J_3 
        &= \alpha \mathcal{H}_{\bg}(\bg) \left[ A_{pq} \mathrm{Vol}_{\bg} \right]^{-1} D^2 \left[ \quot{p}{q}{g}{g} \right](\bar{g})(h, h)\\
        &= \alpha \mathcal{H}_{\bg}(\bg) \left[ A_{pq} \mathrm{Vol}_{\bg} \right]^{-1} \left\{ \int_M  \left(\frac{\sigma_p}{\sigma_q}\right)''(\bar{g}) dv_{\bg} +  2 \int_M  \left(\frac{\sigma_p}{\sigma_q}\right)'(\bar{g})  (dv)'(\bar{g})\right\}\\
        & \ \ \  +\alpha \mathcal{H}_{\bg}(\bg) \left[ A_{pq} \mathrm{Vol}_{\bg} \right]^{-1} \frac{\sigma_p(\bar{g})}{\sigma_q(\bar{g})}\int_M (dv)''(\bar{g})\\
        &= \alpha \mathcal{H}_{\bg}(\bg) \left[ A_{pq} \mathrm{Vol}_{\bg} \right]^{-1} \left\{ \int_M  \left(\frac{\sigma_p}{\sigma_q}\right)''(\bar{g})dv_{\bg} +  \int_M  \left(\frac{\sigma_p}{\sigma_q} \right)'(\bar{g})tr_{\bg}hdv_{\bg} + A_{pq} (\mathrm{Vol})''(\bar{g})\right\}
    \end{align*}
    and
    \begin{align*}
        J_5
        &= \beta \mathcal{H}_{\bg}(\bg) \left[ A_{kl} \mathrm{Vol}_{\bg} \right]^{-1} \int_M  \left(\frac{\sigma_k} {\sigma_l}\right)''(\bar{g})dv_{\bg},
    \end{align*}
    therefore,
    \begin{align*}
        \left[ \mathcal{H}_{\bg}(\bg) \right]^{-1} \mathrm{Vol}_{\bg} \left[J_3 + J_5 \right]
        =& \alpha A_{pq}^{-1} \int_M \left( \frac{\sigma_p}{\sigma_q}\right)''(\bg) dv_{\bg} + \beta A_{kl}^{-1} \int_M \left( \frac{\sigma_k}{\sigma_l}\right)''(\bg) dv_{\bg}\\
        &+ \alpha A_{pq}^{-1} \int_M \left(  \frac{\sigma_p}{\sigma_q}\right)'(\bg)tr_{\bg} h dv_{\bg} + \alpha (\mathrm{Vol})''(\bg).
    \end{align*}
    For the first two terms, by \Cref{prop: second variation of quotient curvature}, we have
    \begin{align*}
        &\alpha A_{pq}^{-1} \int_M \left(\frac{\sigma_p}{\sigma_q}\right)'' (\bg) dv_{\bg} + \beta A_{kl}^{-1} \int_M \left(\frac{\sigma_k}{\sigma_l}\right)''(\bg)dv_{\bg}\\
        =& C_1 \int_M tr_{\bg} S''_{\bg} dv_{\bg} + C_2 \int_M |S'_{\bg}|^2dv_{\bg} + C_3 \int_M h^{ij} S'_{ij}(\bg) dv_{\bg} + C_4  \int_M (R'_{\bg})^2dv_{\bg} + C_5 \int_M |h|^2 dv_{\bg}
    \end{align*}
    where
    \begin{align*}
        C_1 &= \frac{\alpha}{(n-2) \lambda};\\
        C_2 &= \frac{2 \alpha}{n(n-1)(n-2)^2 \lambda^2} \left[ 2(p-q) (k+l-p-q) - n (k+l-1) \right];\\
        C_3 &= -\frac{2 \alpha}{n(n-1)(n-2) \lambda} \left[ 2(p-q) (k+l-p-q) + n(n-k-l) \right];\\
        C_4 &= \frac{\alpha (p-q)}{n^2 (n-1)^3 \lambda^2} \left[ n(p-q-k+l) + 2(q-l)\right] + \frac{\alpha}{2n(n-1)^3 \lambda^2} \left[ n(k-l) - (n-2l) \right];\\
        C_5 &= \frac{\alpha (p-q)}{n(n-1)} \left[ k+ l - p - q\right] + \frac{(n-k-l)\alpha}{2(n-1)} + \frac{\alpha}{2}.
    \end{align*}
    For the third term, integration by part implies that
    \begin{align*}
        &\alpha A_{pq}^{-1} \int_M  \left( \frac{\sigma_p}{\sigma_q}\right)'(\bg) tr_{\bg}h dv_{\bg}\\
        =& \frac{\alpha(p-q)}{n(n-1)\lambda} \int_M [-\Delta_{\bg}  tr_{\bg} h \cdot tr_{\bg} h + \delta^2_{\bg} h \cdot tr_{\bg} h - (n-1)\lambda  (tr_{\bg} h)^2 ] dv_{\bg}\\
        =& \frac{\alpha(p-q)}{n^2 \lambda} \int_M |\nabla_{\bg} tr_{\bg} h|^2 dv_{\bg} - \frac{\alpha(p-q)}{n} \int_M  (tr_{\bg} h)^2 dv_{\bg}.
    \end{align*}
   And for the last term, by the direct calculation, it follows 
    \begin{align*}
        \alpha  (\mathrm{Vol})''(\bg) = \frac{\alpha}{4} \int_M  (tr_{\bg} h)^2 dv_{\bg} - \frac{\alpha}{2} \int_M |h|^2 dv_{\bg}.
    \end{align*}
    Combining with all the above results, we finally get 
    \begin{equation*}
        \begin{split}
            &\left[ \mathcal{H}_{\bg}(\bg) \right]^{-1} \mathrm{Vol}_{\bg} D^2 \mathcal{H}_{\bg}(\bg)(h, h)\\
            =& -\frac{\alpha\left[ \beta(k+l) + 2(p^2 - q^2) -n \right]}{2n(n-1)(n-2)^2 \lambda^2} \int_M  |\Delta_{E}^{\bg}\oc{h}|^2 dv_{\bg} + \frac{\alpha}{4(n-1)\lambda} \int_M \oc{h} \cdot \Delta_{E}^{\bg} \oc{h} dv_{\bg}\\
            &- \frac{\alpha \left[2(p-q)(q-l) + nl\right]}{n^4 \lambda^2} \int_M \left[(\Delta_{\bg} tr_{\bg} h)^2 - n \lambda |\nabla_{\bg} tr_{\bg} h|^2 \right] dv_{\bg}\\
            & - \frac{\alpha\beta(\alpha+\beta)}{4n^3 \lambda} \int_M \left[|\nabla_{\bg} tr_{\bg} h|^2 - n\lambda \left( tr_{\bg} h - \overline{tr_{\bg} h} \right)^2\right] dv_{\bg},
        \end{split}        
    \end{equation*}
    where
    \begin{equation*}
        \overline{tr_{\bg}h}:= \mathrm{Vol}_{\bg}^{-1} \int_M tr_{\bg}hdv_{\bg}
    \end{equation*}
   denotes the average of $tr_{\bg}h$ on $M$ and
    \begin{equation*}
        \int_M (tr_{\bg}h - \overline{tr_{\bg}h})^2 dv_{\bg}
        =\int_M(tr_{\bg}h)^2dv_{\bg} - \mathrm{Vol}_{\bg}^{-1} \left(\int_M tr_{\bg}h dv_{\bg}\right)^2.
    \end{equation*}
\end{proof}

Before proceeding to the next section, we analyze the coefficients appearing in the second variation. We first observe that the quantity $2(p-q)(q-l)+nl$ is non-negative, given that $1 \leq q \leq p \leq n$ and $l \geq 1$. Focusing now on the coefficient of the first term, we establish the following lemma.

\begin{lemma}\label{lemma: integer_inequality}
    Let $n, k, l, p, q$ be integers satisfying $1\leq l<k \leq n$ and $1\leq q \leq p \leq n$. Then 
    \begin{equation*}
        \beta(k+l) + 2(p^2 - q^2) -n \geq 0.
    \end{equation*}
\end{lemma}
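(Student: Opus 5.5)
The plan is to reduce the inequality to an elementary estimate in two auxiliary variables and then split into cases according to the sign of $\beta$. Set
\begin{equation*}
d := p-q \geq 0, \qquad s := p+q, \qquad m := k+l .
\end{equation*}
Then $p^2-q^2 = ds$ and $\beta = n-2d$, so the quantity in \Cref{lemma: integer_inequality} becomes
\begin{equation*}
f := \beta(k+l) + 2(p^2-q^2) - n = m(n-2d) - n + 2ds .
\end{equation*}
The hypotheses translate into the following constraints. From $1\leq l<k\leq n$ we get $3 \leq m \leq 2n-1$. From $q\geq 1$ we get $s = 2q+d \geq d+2$. From $p\leq n$ we get $d \leq n-1$. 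The key observation is that, for fixed $d$ and $s$, $f$ is affine in $m$ with slope $\beta = n-2d$. So it suffices to check $f\geq 0$ at the endpoint of the range of $m$ dictated by the sign of $\beta$.

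\emph{Case $\beta\geq 0$, i.e.\ $2d\leq n$.} Here $f$ is nondecreasing in $m$, so I will take $m=3$ and then use $s\geq d+2$:
\begin{equation*}
f \geq 3(n-2d) - n + 2ds = 2n + 2d(s-3) \geq 2n + 2d(d-1) \geq 0 .
\end{equation*}

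\emph{Case $\beta<0$, i.e.\ $2d>n$.} Here $f$ is decreasing in $m$, so I will take $m = 2n-1$ and again use $s \geq d+2$:
\begin{equation*}
f \geq (2n-1)(n-2d) - n + 2d(d+2) = 2(n-d)^2 - 2(n-d) + 4d .
\end{equation*}
Since $n-d\geq 1$ is a positive integer, $(n-d)^2\geq n-d$, and the right-hand side is $\geq 4d \geq 0$.

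The only place needing care is this second case. There $\beta$ is negative, so the monotonicity in $m$ reverses, and one must use the upper bound $m\leq 2n-1$ together with the integrality of $n-d$ instead of the crude lower bound on $m$. The remaining work is routine algebra to confirm the two displayed simplifications.
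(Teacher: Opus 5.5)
Your proof is correct and follows essentially the same route as the paper. Both substitute $p-q$ and $p+q$, split on the sign of $\beta$, and use the extremal values $k+l=3$ and $k+l=2n-1$; the only cosmetic difference is that you bound $p+q\geq (p-q)+2$ in both cases, whereas the paper uses $p+q\geq 2$ together with $n-x\geq x$ in the first case and $p+q>p-q$ in the second, reaching $2(n-x)(n-1-x)\geq 0$.
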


\begin{proof}
    Let $x = p-q$ and $y = p+q$, then $\beta=n-2(p-q)=n-2x$ and 
    \begin{equation*}
    \beta(k+l) + 2(p^2 - q^2) -n 
        =(n-2x)(k+l)+ 2xy - n.
    \end{equation*}

    \noindent \textbf{Case 1:} Suppose $n-2x \geq 0$.
    Since $k+l \geq 3$, $y\geq2$, $x\geq0$, we have
    \begin{equation*}
        \begin{split}
            (n-2x)(k+l)+ 2xy - n &\geq 2(n-x)+ 2x(y-2) \\
            &\geq 2x+ 2x(y-2)=2x(y-1)\geq0.
        \end{split}
    \end{equation*}

    \noindent \textbf{Case 2:} Suppose $n - 2x < 0$.
    Since $k+l \leq 2n-1$, $y > x$ and $x\leq n-1$, then
    \begin{equation*}
        \begin{split}
            (n-2x)(k+l)+ 2xy - n &\geq(n-2x)(2n - 1) + 2x^2 - n \\
            &= 2 (n-x) (n-1-x)\geq0.
        \end{split}
    \end{equation*}
    This completes the proof.
\end{proof}

\section{Total quotient curvature comparison}\label{sec:main}

Having derived the explicit formulas for the first and second variations of the functional $\mathcal{H}_{\bar{g}}$ in Section \ref{sec:functional}, we are now equipped to establish the total quotient curvature comparison results. The variational formulas in Proposition \ref{prop: second order variation of functional} reduce the problem of the total quotient curvatures comparison to the studying the sign  of a quadratic form on the space of symmetric 2-tensors.

In this section, we analyze the specific terms arising in the second variation. By decomposing the perturbations $h$ into trace part $u=tr_{\bg}h$ and trace-free parts $\oc{h}$, we apply the strictly stable Einstein condition to control the transverse-traceless components, while the trace components are estimated by the Bochner technique and the Lichnerowicz-Obata theorem. This spectral analysis culminates in the proof of Theorem \ref{theorem: main1}.

\begin{lemma}\label{lemma: bochner}
Let $(M^n, \bg)$ be a closed n-dimensional Einstein manifold with $\mathrm{Ric}_{\bg} = (n-1) \lambda \bg$, we have
\begin{equation}
    \int_M \left[(\Delta_{\bg} tr_{\bg} h)^2 - n \lambda |\nabla_{\bg} tr_{\bg} h|^2 \right] dv_{\bg} \geq 0.
\end{equation}
\end{lemma}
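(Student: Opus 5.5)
Set $u = tr_{\bg}h$; the statement is then an inequality for an arbitrary smooth function $u$ on $M$. The plan is the integrated Bochner formula combined with the Cauchy--Schwarz inequality for the Hessian, which is the same argument as in the Lichnerowicz eigenvalue estimate.

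\textbf{Step 1 (Bochner identity).} I will start from the pointwise Bochner formula
\begin{equation*}
\frac12\Delta_{\bg}|\nabla_{\bg} u|^2 = |\nabla^2_{\bg} u|^2 + \langle \nabla_{\bg} u, \nabla_{\bg}\Delta_{\bg} u\rangle + \Ric_{\bg}(\nabla_{\bg} u,\nabla_{\bg} u).
\end{equation*}
I will integrate it over the closed manifold $M$. The left-hand side integrates to zero, and integration by parts gives $\int_M \langle \nabla u,\nabla \Delta u\rangle = -\int_M (\Delta u)^2$. With the Einstein condition $\Ric_{\bg}=(n-1)\lambda\bg$ this yields
\begin{equation*}
\int_M (\Delta_{\bg} u)^2 dv_{\bg} = \int_M |\nabla^2_{\bg} u|^2 dv_{\bg} + (n-1)\lambda\int_M |\nabla_{\bg} u|^2 dv_{\bg}.
\end{equation*}

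\textbf{Step 2 (Hessian bound).} I will use the pointwise inequality $|\nabla^2 u|^2 \ge \frac1n(\Delta u)^2$, which follows from Cauchy--Schwarz applied to the trace-free decomposition of the Hessian. Substituting this into the identity from Step 1 gives
\begin{equation*}
\frac{n-1}{n}\int_M(\Delta u)^2 \ge (n-1)\lambda\int_M|\nabla u|^2.
\end{equation*}
Since $n\ge3$, we may multiply by $\frac{n}{n-1}>0$, which gives exactly $\int_M[(\Delta u)^2 - n\lambda|\nabla u|^2]\ge0$.

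\textbf{Step 3 (sign of $\lambda$).} Step 2 needs no assumption on $\lambda$, so the inequality holds for every Einstein constant. If $\lambda\le 0$ it is also trivial, because the integrand is then a sum of nonnegative terms. No real obstacle is expected.

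The only care needed is the sign convention for $\Delta$. I take $\Delta$ to be the analyst's Laplacian, which is consistent with $R'$ containing $-\Delta\, tr h$ in the paper. The identity in Step 1 does not depend on this convention, because both $(\Delta u)^2$ and $|\nabla u|^2$ are invariant under changing the sign of $\Delta$.

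Equality holds exactly when $\nabla^2 u = \frac{\Delta u}{n}\bg$. By Obata's theorem, this means $u$ is either constant or a first eigenfunction on the round sphere.
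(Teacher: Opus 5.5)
Your argument is correct and is exactly the paper's proof: integrate the Bochner formula, use $\mathrm{Ric}_{\bg}=(n-1)\lambda\bg$, and apply $|\nabla^2_{\bg} u|^2\ge\frac1n(\Delta_{\bg} u)^2$. Your extra equality remark is not needed for the lemma and is slightly imprecise in two ways. First, the integrand is unchanged when a constant is added to $u$, so on the round sphere the equality set consists of constants plus first eigenfunctions, not just one or the other. Second, to reach Obata's equation $\nabla^2 v=-\lambda v\bg$ from $\nabla^2 u=\frac{\Delta u}{n}\bg$, you must first take a divergence to get $\Delta u+n\lambda u=\mathrm{const}$.
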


\begin{proof}
By Bochner formula, for any function $u\in C^\infty(M^n)$
\begin{align*}
    \frac{1}{2}\Delta_{\bg}(|\nabla_{\bg} u|^2)
    &=|\nabla^2_{\bg} u|^2+\langle\nabla_{\bg} u , \nabla_{\bg}\Delta_{\bg} u\rangle+Ric(\nabla_{\bg} u, \nabla_{\bg} u)\\
    &=|\nabla^2_{\bg} u|^2+\langle\nabla_{\bg} u, \nabla_{\bg}\Delta_{\bg} u\rangle+(n-1)\lambda|\nabla_{\bg} u|^2.
\end{align*}
Then integrating on $M^n$ implies that
\begin{align*}
    0&=\frac{1}{2}\int_M\Delta_{\bg}(|\nabla_{\bg} u|^2)dv_{\bg}\\
    &=\int_M|\nabla^2_{\bg} u|^2dv_{\bg}-\int_M(\Delta_{\bg} u)^2 dv_{\bg}+(n-1)\lambda\int_M|\nabla_{\bg} u|^2dv_{\bg}\\
    &\geq\frac{1}{n} \int_M(\Delta_{\bg} u)^2dv_{\bg}-\int_M(\Delta_{\bg} u)^2 dv_{\bg} + (n-1)\lambda\int_M|\nabla_{\bg} u|^2dv_{\bg}\\
    &=-\frac{n-1}{n} \int_M\big[(\Delta_{\bg} u)^2-n\lambda|\nabla_{\bg} u|^2\big] dv_{\bg}, 
\end{align*}
where the above inequality holds due to Schwartz inequality
\begin{align*}
    |\nabla^2_{\bg}u|^2 \geq \frac{1}{n} (\Delta_{\bg} u)^2.
\end{align*}
So if we let $u=tr_{\bg}h$, then
\begin{align*}
    \int_M \left[(\Delta_{\bg}tr_{\bg}h)^2-n \lambda |\nabla_{\bg}tr_{\bg}h|^2 \right]dv_{\bg} \geq 0.
\end{align*}
\end{proof}

Similarly, the fourth term in the second variation formula in \Cref{prop: second order variation of functional} compares the Dirichlet energy of the trace to its variance. To control this term, we require a lower bound on the first non-zero eigenvalue of the Laplace-Beltrami operator $\Delta_{\bar{g}}$. We recall the classical Lichnerowicz-Obata theorem, which provides the sharp spectral gap estimate for manifolds with positive Ricci curvature. For a detailed exposition of this result, we refer the reader to  \cite{Lichnerowicz, Obata}.

\begin{lemma}[Lichnerowicz-Obata]\label{lemma:Lichnerowicz-Obata}
  Let $(M^n, \bg)$ be a closed n-dimensional manifold with
  \begin{align*}
    \mathrm{Ric}_{\bar{g}}\geq(n-1)\lambda \bar{g},
  \end{align*}
  where $ \lambda>0$ is a constant. Then for any function $u \in C^\infty(M^n)$ that is not identically constant, $\bar{u}= \mathrm{Vol}_{\bg}^{-1} \int_M u dv_{\bg}$, we have
  \begin{align*}
    \int_M |\nabla_{\bg} u|^2 dv_{\bar{g}} \geq n\lambda\int_M (u - \bar{u})^2 dv_{\bar{g}},
  \end{align*}
  where equality holds if and only if $(M^n,\bar{g})$ is isometric to the round sphere ${\mathbb{S}}^n(r)$ with radius $r=\frac{1}{\sqrt{\lambda}}$ and $u$ is the first eigenfunction of the Laplace-Beltrami operator $\Delta_{\bar{g}}$.
\end{lemma}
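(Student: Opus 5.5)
The statement is the classical Lichnerowicz--Obata estimate, so the plan is to combine a Bochner argument, as in the proof of \Cref{lemma: bochner}, with the spectral theorem for $\Delta_{\bg}$.

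\textbf{Reduction.} First replace $u$ by $u-\bar u$. This changes neither side, so we may assume $\int_M u\,dv_{\bg}=0$. Expand $u=\sum_{i\geq 1} c_i\phi_i$ in an $L^2$-orthonormal eigenbasis with $-\Delta_{\bg}\phi_i=\mu_i\phi_i$ and $\mu_i>0$. Then
\begin{equation*}
\int_M|\nabla_{\bg}u|^2\,dv_{\bg}=\sum_i\mu_i c_i^2,\qquad \int_M u^2\,dv_{\bg}=\sum_i c_i^2 .
\end{equation*}
It therefore suffices to prove that every nonzero eigenvalue satisfies $\mu\geq n\lambda$.

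\textbf{Eigenvalue bound.} Let $\phi$ be an eigenfunction with $-\Delta_{\bg}\phi=\mu\phi$ and $\mu>0$. Integrate the Bochner formula and use $\mathrm{Ric}_{\bg}\geq(n-1)\lambda\bg$ together with $|\nabla^2_{\bg}\phi|^2\geq\frac1n(\Delta_{\bg}\phi)^2$:
\begin{equation*}
0\geq \frac1n\int_M(\Delta_{\bg}\phi)^2\,dv_{\bg}-\int_M(\Delta_{\bg}\phi)^2\,dv_{\bg}+(n-1)\lambda\int_M|\nabla_{\bg}\phi|^2\,dv_{\bg}.
\end{equation*}
Substitute $\int_M(\Delta_{\bg}\phi)^2\,dv_{\bg}=\mu^2\int_M\phi^2\,dv_{\bg}$ and $\int_M|\nabla_{\bg}\phi|^2\,dv_{\bg}=\mu\int_M\phi^2\,dv_{\bg}$. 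This gives $\frac{n-1}{n}\mu^2\geq(n-1)\lambda\mu$, hence $\mu\geq n\lambda$, and the inequality follows.

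\textbf{Equality case.} This is the main obstacle. If equality holds, then $u$ is a first eigenfunction with $\mu=n\lambda$. Moreover, equality must hold in both the Cauchy--Schwarz step and the Ricci step. The Cauchy--Schwarz equality gives
\begin{equation*}
\nabla^2_{\bg}u=\frac{\Delta_{\bg}u}{n}\bg=-\lambda u\,\bg .
\end{equation*}
Obata's theorem then applies: a complete manifold admitting a nonconstant solution of $\nabla^2 u=-\lambda u\,g$ is isometric to $\mathbb{S}^n(1/\sqrt{\lambda})$. I would cite this result rather than reprove it. A sketch of its proof: along unit-speed geodesics from a maximum point of $u$, one has $u''=-\lambda u$. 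Hence $u=u_{\max}\cos(\sqrt\lambda\, r)$, and comparison of the metric in polar coordinates shows it is the round metric.

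Conversely, on the round sphere of radius $1/\sqrt\lambda$, restrictions of linear functions satisfy this Hessian equation and attain equality. This verifies the "if" direction.
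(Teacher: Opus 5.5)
Your proposal is correct and follows the classical route the paper relies on. The paper gives no proof of this lemma and only cites Lichnerowicz and Obata; your eigenvalue bound is the same integrated Bochner computation as in the proof of \Cref{lemma: bochner}, applied to an eigenfunction, and you invoke Obata's theorem for the rigidity, as the citation intends. One point deserves explicit mention: the equality case really characterizes $u-\bar u$ as a first eigenfunction (so $u$ is a constant plus a first eigenfunction), and your reduction to $\int_M u\,dv_{\bg}=0$ already accounts for this.
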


By applying \Cref{lemma: bochner} and \Cref{lemma:Lichnerowicz-Obata} to \Cref{prop: second order variation of functional}, we immediately obtain the non-positive definiteness of the second-order variation of $\mathcal{H}_{\bar{g}}(\cdot)$ at the Einstein metric $\bar{g}$ and we arrive:

\begin{proposition}\label{prop: characteristic_second_order}
  Suppose $(M^n,\bar{g})$ is a strictly stable Einstein manifold with Ricci curvature tensor
  \begin{align*}
    \mathrm{Ric}_{\bar{g}}=(n-1)\lambda\bar{g},
  \end{align*}
  where $\lambda > 0$ is a constant, then $\bar{g}$ is a critical point of $\mathcal{H}_{\bar{g}}(\cdot)$ and 
  \begin{align*}
    D^2\mathcal{H}_{\bar{g}}(\bar{g})(h,h)\leq 0
  \end{align*}
  for any $h=\mathring{h}+\frac{1}{n}(tr_{\bar{g}}h)\bar{g}\in S^{TT}_{2,\bar{g}}(M)\oplus(C^{\infty}(M)\cdot \bar{g})$, provided the indices $1 \leq q \leq p \leq n$ satisfy either $2(p-q) \leq n$ or $2(p-q) \geq n + 2(k-l)$. Furthermore, the equality holds if and only if
  \begin{itemize}
    \item $h\in \mathbb{R}\cdot\bar{g}$, and $(M^n ,\bar{g})$ is not isometric to the round sphere up to a rescaling of the metric.
    \item $h\in E_{n\lambda} \cdot\bar{g}$, and $(M^n,\bar{g})$ is isometric to the round sphere $( \mathbb{S}^n(1/\sqrt{\lambda}), g_0)$,
    where
    \begin{align*}
      E_{n\lambda}=\left\{ u\in C^{\infty} (\mathbb{S}^n ( 1/\sqrt{\lambda} ) ) | \Delta_{g_0}u + n\lambda u=0 \right\}
    \end{align*}
    is the space of the first eigenfunctions for the spherical metric $g_0$.
  \end{itemize}
\end{proposition}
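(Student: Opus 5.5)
The plan is to read off the sign of each of the four terms in the second variation formula of \Cref{prop: second order variation of functional}, and then to analyse the equality case term by term. Criticality of $\bar g$ is exactly \Cref{proposition:einstein_critical}, so only the second variation needs attention. Since $\mathcal{H}_{\bg}(\bg)=A_{pq}^{\alpha}A_{kl}^{\beta}\mathrm{Vol}_{\bg}^{\alpha+\beta}>0$ (here $\lambda>0$, so all $A$'s are positive), it suffices to show that the right-hand side of that formula is $\le 0$. Throughout I write $u=tr_{\bg}h$ and $\alpha=2(k-l)>0$.

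\textbf{Sign of each term.}
\begin{itemize}
\item First term: by \Cref{lemma: integer_inequality} the coefficient $-\alpha[\beta(k+l)+2(p^2-q^2)-n]/(2n(n-1)(n-2)^2\lambda^2)$ is $\le 0$, and it multiplies $\int_M|\Delta_E^{\bg}\oc h|^2\,dv_{\bg}\ge 0$.
\item Second term: the coefficient $\alpha/(4(n-1)\lambda)$ is positive. Strict stability gives $\int_M \oc h\cdot\Delta_E^{\bg}\oc h\,dv_{\bg}\le -c\int_M|\oc h|^2\,dv_{\bg}$ for some $c>0$, so this term is $\le 0$ and vanishes only if $\oc h=0$.
\item Third term: I use the observation before \Cref{lemma: integer_inequality} that $2(p-q)(q-l)+nl\ge 0$, which makes the coefficient $\le 0$. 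The integrand $\int_M[(\Delta_{\bg}u)^2-n\lambda|\nabla_{\bg}u|^2]\,dv_{\bg}$ is nonnegative by \Cref{lemma: bochner}.
\item Fourth term: its sign is governed by $\beta(\alpha+\beta)$, which is $\ge 0$ exactly when $\beta\ge 0$ or $\beta\le-\alpha$. These are the two index conditions $2(p-q)\le n$ and $2(p-q)\ge n+2(k-l)$. The bracket $\int_M[|\nabla_{\bg}u|^2-n\lambda(u-\bar u)^2]\,dv_{\bg}$ is $\ge 0$ by \Cref{lemma:Lichnerowicz-Obata}.
\end{itemize}
Adding these gives $D^2\mathcal{H}_{\bg}(\bg)(h,h)\le 0$.

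\textbf{Equality case.} If $D^2\mathcal{H}_{\bg}(\bg)(h,h)=0$, every term must vanish, and the second term forces $\oc h=0$, so $h=\frac{u}{n}\bg$. Two sub-cases remain.
\begin{itemize}
\item If $\beta(\alpha+\beta)>0$, the fourth term forces equality in Lichnerowicz--Obata. Hence either $u$ is constant, or $(M,\bg)$ is the round sphere $\mathbb{S}^n(1/\sqrt\lambda)$ and $u-\bar u$ is a first eigenfunction.
\item If $\beta=0$ or $\alpha+\beta=0$, the fourth term disappears and I must use the third term instead. When $nl>0$ dominates the coefficient is strictly negative, so equality must hold in \Cref{lemma: bochner}. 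Tracing that proof, equality in $|\nabla^2u|^2\ge\frac1n(\Delta u)^2$ means $\nabla^2_{\bg}u=\frac{\Delta_{\bg} u}{n}\bg$. The equality chain also gives $\int(\Delta u)^2=n\lambda\int|\nabla u|^2$. Together these lead, via Obata's theorem, to the same dichotomy: $u$ is constant, or the manifold is the round sphere and $u$ is a first eigenfunction plus a constant.
\end{itemize}
Conversely, for $h=c\bg$ or $h=(c+\phi)\bg$ with $\phi\in E_{n\lambda}$ on the sphere, each term vanishes: Bochner's inequality and Lichnerowicz--Obata are both equalities for first eigenfunctions on the round sphere. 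This reproduces the two bullet cases of the statement, with constants understood to be included in the spherical case.

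\textbf{Main obstacle.} The delicate point is the borderline indices $\beta=0$ or $\alpha+\beta=0$. There the rigidity has to come from the Bochner term alone, so I must make sure its coefficient $2(p-q)(q-l)+nl$ is strictly positive. The factor $q-l$ could be negative, so I would verify this carefully, using $p-q\le n/2$ when $\beta=0$; if it fails, I would fall back on strict stability only to conclude $\oc h=0$ and then reexamine the term directly. I would also check that Obata's rigidity correctly identifies the equality functions in that case.
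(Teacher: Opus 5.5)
Your plan is the paper's own proof: read off the sign of each of the four terms in \Cref{prop: second order variation of functional} using \Cref{lemma: integer_inequality}, strict stability, \Cref{lemma: bochner} and \Cref{lemma:Lichnerowicz-Obata}. Your equality analysis is more careful than the paper's. The paper never treats $\beta(\alpha+\beta)=0$, where the Lichnerowicz--Obata term carries no information, and it drops the constants in the spherical case.

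\textbf{The gap.} Your ``Third term'' step fails, and you inherit the failure from the remark before \Cref{lemma: integer_inequality}. The claim $C:=2(p-q)(q-l)+nl\ge 0$ is false in the range $2(p-q)\ge n+2(k-l)$ when $q<l$. For example, $n=5$, $(k,l,p,q)=(5,4,5,1)$ satisfies $2(p-q)=8\ge n+2(k-l)=7$, yet $C=8\cdot(-3)+20=-4$; the choice $(k,l)=(4,3)$ gives $C=-1$.

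This is a defect of the statement, not of your estimate. Take $\mathring h=0$ and $h=\frac{u}{n}\bar g$ with $-\Delta_{\bar g}u=\mu u$ and $\mu>n\lambda$. The second-variation formula then gives
\[
[\mathcal{H}_{\bar g}(\bar g)]^{-1}\mathrm{Vol}_{\bar g}\,D^2\mathcal{H}_{\bar g}(\bar g)(h,h)=(\mu-n\lambda)\|u\|_{L^2}^2\Big(-\frac{\alpha C\mu}{n^4\lambda^2}-\frac{\alpha\beta(\alpha+\beta)}{4n^3\lambda}\Big).
\]
Whenever $C<0$ this is positive for large $\mu$; in the example it is positive for every $\mu>5\lambda$. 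The coefficient $C$ is not a misprint. Expanding $\mathcal{H}_{\bar g}(e^{2\varepsilon v}\bar g)$ to second order with the conformal transformation law of $S_g$ reproduces exactly these two trace terms.

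The endpoint $2(p-q)=n+2(k-l)$ also fails in general. There $C=nq+\alpha(q-l)$, which can vanish, e.g. for $n=6$, $(k,l,p,q)=(5,4,5,1)$. Then both trace terms vanish identically, so every $h\in C^\infty(M)\cdot\bar g$ is a null direction and the claimed equality characterization is false. Your fallback of ``re-examining the term directly'' therefore cannot succeed.

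\textbf{What your argument does prove.} In the range $2(p-q)\le n$ you have $C\ge n\min\{q,l\}>0$; when $q<l$, use $2(p-q)\le n$ to get $C\ge nq$. So your proof is complete there.

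This also settles the borderline case you worried about. At $\beta=0$ you have $C=nq>0$. Equality in the Bochner term then gives the same dichotomy as Lichnerowicz--Obata, via the eigenfunction expansion $\int_M[(\Delta u)^2-n\lambda|\nabla u|^2]\,dv_{\bar g}=\sum_i\mu_i(\mu_i-n\lambda)a_i^2$ together with Obata's theorem.

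In the range $2(p-q)\ge n+2(k-l)$ you need an extra hypothesis such as $C\ge 0$ (for instance $q\ge l$, which gives $C\ge nl$). For the equality statement you additionally need $C>0$ at the endpoint $2(p-q)=n+2(k-l)$.
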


\begin{proof}
    According to the second order variation formula in \Cref{prop: second order variation of functional} and combining with \Cref{lemma: integer_inequality,lemma: bochner,lemma:Lichnerowicz-Obata}, $D^2 \mathcal{H}_{\bar{g}}(\bar{g})(h,h)$ is non-positive definite. Furthermore, when $D^2 \mathcal{H}_{\bar{g}}(\bar{g})(h,h)= 0$, we have
      \begin{align*}
        -\frac{\alpha\left[ \beta(k+l) + 2(p^2 - q^2) -n \right]}{2n(n-1)(n-2)^2 \lambda^2} \int_M |\Delta_{E}^{\bg}\oc{h}|^2 dv_{\bg} + \frac{\alpha}{4(n-1)\lambda} \int_M \oc{h} \cdot \Delta_{E}^{\bg} \oc{h} dv_{\bg} = 0,
      \end{align*}
  \begin{align*}
    \int_M \left[|\nabla_{\bg} tr_{\bar{g}}h|^2 - n\lambda (tr_{\bar{g}}h - \overline{tr_{\bar{g}}h})^2 \right]\dvg = 0
  \end{align*}
and
 \begin{align*}
   \int_M \left[ (\Delta_{\bg} tr_{\bg} h)^2 - n \lambda \left| \nabla_{\bg} tr_{\bg} h \right|^2 \right] \dvg=0.
    \end{align*}
  The first equation implies that $\mathring{h}=0$ since the Einstein metric $\bar{g}$ is strictly stable. For the second and third equations, by Lemma \ref{lemma:Lichnerowicz-Obata}, we have
  \begin{itemize}
    \item if $(M^n,\bar{g})$ is not isometric to the round sphere, then $tr_{\bar{g}}h = \overline{tr_{\bar{g}}h}$, so $tr_{\bar{g}}h$ is constant and $h =\frac{tr_{\bar{g}}h}{n} \bar{g}\in \mathbb{R}\cdot \bar{g}$.
    \item if $(M^n,\bar{g})$ is isometric to the round sphere, then $tr_{\bar{g}}h \in E_{n\lambda}$, thus $h =\frac{tr_{\bar{g}}h}{n} \bar{g} \in  E_{n\lambda} \cdot\bar{g}$.
  \end{itemize}
\end{proof}

In order to investigate the local structure of Einstein metrics, we introduce the following slice theorem \cite{Ebin-slice,Viaclovsky-critical}:
\begin{theorem}[Ebin-Palais slice theorem]\label{Theorem: Ebin-Palais slice theorem}
    Suppose $(M^n,\bg)$ is a closed n-dimensional Einstein manifold with $Ric_{\bg}=(n-1)\lambda\bg$,  $\lambda >0$. Let $\mathcal{M}$ be the space of all Riemannian metrics on $M$. There exists a slice $\mathcal{S}_{\bg}$ through $\bg$ in $\mathcal{M}$. That is, For a fixed real number $p>n$, one can find a constant $\epsilon>0$ such that for any metrics $g \in \mathcal{M}$ with $||g-\bg||_{W^{2,p}(M,\bg)} < \epsilon$, there exists a diffeomorphism $\varphi$ with $\varphi^*g\in\mathcal{S}_{\bg}$. Moreover, for a smooth local slice $\mathcal{S}_{\bg}$, we have \begin{itemize}
        \item $T_{\bg}\mathcal{S}_{\bg}=S_{2,\bg}^{TT}(M)\oplus(C^{\infty}(M)\cdot\bg)$ when $(M^n,\bg)$ is not isometric to the round sphere;
        \item $T_{\bg}\mathcal{S}_{\bg}=S_{2,\bg}^{TT}(M)\oplus(E_{n\lambda}^{\perp}\cdot\bg)$ when $(M^n,\bg)$ is isometric to $\mathbb{S}^n(1/\sqrt{\lambda})$,\\
        where $E_{n\lambda}^{\perp}=\left\{\left.u\in C^{\infty}\left(\mathbb{S}^n\left(1/\sqrt{\lambda}\right)\right)\ \right|\int_{\mathbb{S}^n\left(1/\sqrt{\lambda}\right)}uvdv_{\bg}=0,\ \forall v\in E_{n\lambda}\right\}$ 
    \end{itemize}
  and
    \begin{equation*}
        S_2(M)=\{\mathcal{L}_{\bg}(X)|X\in\mathfrak{X}(M)\}\oplus T_{\bg}\mathcal{S}_{\bg},
    \end{equation*}
    where $\mathfrak{X}(M)$ is the space of smooth vector fields on the manifold $M$ and $\mathcal{L}_{\bg}(X)$ is the Lie derivative of the metric $\bg$ with respect to the vector field $X$.
\end{theorem}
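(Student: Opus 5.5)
The plan is to follow Ebin's construction, specialized to an Einstein background, and then identify the tangent space of the slice using the Einstein condition. The diffeomorphism group $\mathcal{D}$ acts on $\mathcal{M}$ by pullback. The orbit through $\bg$ has tangent space $\{\mathcal{L}_{\bg}(X)\mid X\in\mathfrak{X}(M)\}$, which is the image of the first-order elliptic (overdetermined) operator $X\mapsto \mathcal{L}_{\bg}X$. Its formal adjoint is $-2\delta_{\bg}$, so elliptic theory gives the $L^2$-orthogonal splitting
\begin{equation*}
S_2(M)=\mathrm{Im}\,\mathcal{L}_{\bg}\oplus\ker\delta_{\bg}.
\end{equation*}
I take the slice to be $\mathcal{S}_{\bg}=\{\bg+h:\ \delta_{\bg}h=0,\ \|h\|_{W^{2,p}}<\epsilon\}$, which is a smooth submanifold modelled on $\ker\delta_{\bg}$.

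The existence of $\varphi$ with $\varphi^*g\in\mathcal{S}_{\bg}$ would come from the implicit function theorem applied to
\begin{equation*}
F(g,X)=\delta_{\bg}\big(\exp(X)^*g-\bg\big),
\end{equation*}
with $X$ ranging over $W^{3,p}$ vector fields $L^2$-orthogonal to Killing fields. Working in $W^{k,p}$ with $p>n$ makes composition smooth enough and the Sobolev embeddings available. The differential in $X$ at $(\bg,0)$ is $X\mapsto\delta_{\bg}\mathcal{L}_{\bg}X$. This operator is elliptic and self-adjoint up to sign, and its kernel is exactly the Killing fields, since $\int\langle\delta\mathcal{L}X,X\rangle=-\tfrac12\int|\mathcal{L}X|^2$. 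Hence it is an isomorphism on the complement of the Killing fields onto the range orthogonal to them.

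I expect the main obstacle to be this step: the isometry group must be handled, for instance by using equivariance and properness of the action, or by noting that the Killing component only contributes an isometry of $\bg$ that preserves the slice. Regularity loss in the diffeomorphism action must also be controlled, which is exactly why the $W^{2,p}$, $p>n$, hypothesis appears. Rather than reprove these points in full, I would follow Ebin's argument closely.

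To identify $T_{\bg}\mathcal{S}_{\bg}=\ker\delta_{\bg}$, I write $h=\oc{h}_{TT}+\tfrac1n u\bg+\mathcal{L}_{\bg}Y$ using York's decomposition. I then solve $\delta(\tfrac1n u\bg+\mathcal{L}Y)=0$, that is $\delta\mathcal{L}Y=\tfrac1n\nabla u$ (up to sign). The Einstein condition gives the Bochner-type identity $\delta\mathcal{L}(\nabla f)=\nabla(2\Delta f+2(n-1)\lambda f)$. So for $Y=\nabla f$ one reduces to the scalar equation
\begin{equation*}
2\Delta f+2(n-1)\lambda f=\tfrac1n u\ \ (\text{up to constants}),
\end{equation*}
which can be adjusted so that divergence-free tensors are exactly TT tensors plus the $C^\infty\cdot\bg$ part, modulo Lie derivatives. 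The operator $\Delta+\tfrac{n}{n-1}\cdot(\text{suitable constant})$ that appears is invertible precisely when $n\lambda$ is not an eigenvalue of $-\Delta_{\bg}$. By Lichnerowicz--Obata (\Cref{lemma:Lichnerowicz-Obata}) this fails only for the round sphere. There $u\bg$ with $u\in E_{n\lambda}$ equals $\mathcal{L}_{\bg}(\nabla u)$ up to a constant, because $\nabla^2u=-\lambda u\bg$. These directions are therefore absorbed into the orbit, leaving $E_{n\lambda}^\perp\cdot\bg$. This produces the two cases of the statement together with the direct sum $S_2(M)=\mathrm{Im}\,\mathcal{L}_{\bg}\oplus T_{\bg}\mathcal{S}_{\bg}$.
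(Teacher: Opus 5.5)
The paper does not prove this theorem; it quotes it from Ebin and Viaclovsky. So the question is whether your argument yields the statement as written. It does not: there is a genuine gap in the identification of the tangent space.

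Your slice $\{\bg+h:\delta_{\bg}h=0\}$ is tangent to $\ker\delta_{\bg}$. That space is not $V:=S^{TT}_{2,\bg}(M)\oplus(C^\infty(M)\cdot\bg)$. For nonconstant $u$ one has $\delta_{\bg}(u\bg)=\nabla_{\bg}u\neq0$, and in fact $\ker\delta_{\bg}\cap V=S^{TT}_{2,\bg}(M)\oplus\mathbb{R}\bg$. What your York-decomposition computation actually proves is that $\ker\delta_{\bg}=S^{TT}_{2,\bg}(M)\oplus\{\tfrac1n u\bg+\mathcal{L}_{\bg}Y_u\}$ is a graph over $V$ with values in $\mathrm{Im}\,\mathcal{L}_{\bg}$. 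Both spaces are complements of the orbit direction and are isomorphic ``modulo Lie derivatives'', but they are not equal.

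The statement asserts equality, and the paper relies on it. Tangent vectors are taken in the gauge where $\delta_{\bg}h$ is a nonzero multiple of $\nabla_{\bg}tr_{\bg}h$. Moreover, $\mathcal{H}_{\bg}$ is not diffeomorphism invariant (its second factor uses the fixed $dv_{\bg}$), so its Hessians on $\ker\delta_{\bg}$ and on $V$ need not correspond.

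You therefore have to build the slice on $V$ itself. Take $\mathcal{S}_{\bg}=\{\bg+h:\ h\in V,\ \|h\|_{W^{2,p}}<\epsilon\}$, with $E^\perp_{n\lambda}$ in place of $C^\infty(M)$ on the sphere. Then solve the gauge condition $\varphi^*g-\bg\in V$: $\delta_{\bg}$ of the $\bg$-trace-free part of $\varphi^*g$ vanishes, and on the sphere the $E_{n\lambda}$-component of $tr_{\bg}(\varphi^*g-\bg)$ vanishes as well. The linearization in $X$ is then the York operator $X\mapsto\delta_{\bg}\bigl(\mathcal{L}_{\bg}X-\tfrac2n(\mathrm{div}_{\bg}X)\bg\bigr)$, whose kernel is the conformal Killing fields. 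It is not $\delta_{\bg}\mathcal{L}_{\bg}$, whose kernel is the Killing fields.

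Your account of the spherical exception also does not hold up. From $\delta_{\bg}\mathcal{L}_{\bg}(\nabla f)=\nabla(2\Delta_{\bg}f+2(n-1)\lambda f)$, the scalar operator you actually get is $\Delta_{\bg}+(n-1)\lambda$. This is invertible on every Einstein manifold with $\lambda>0$, the round sphere included, because the nonzero eigenvalues of $-\Delta_{\bg}$ are at least $n\lambda>(n-1)\lambda$. No ``suitable constant'' turns it into $\Delta_{\bg}+n\lambda$. Consistently, $\ker\delta_{\bg}$ complements $\mathrm{Im}\,\mathcal{L}_{\bg}$ with no exception on $\mathbb{S}^n$, which is another sign that it is not the space in the statement.

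The eigenvalue $n\lambda$ actually enters through the failure of $\mathrm{Im}\,\mathcal{L}_{\bg}+C^\infty(M)\cdot\bg$ to be a direct sum. If $\mathcal{L}_{\bg}X=v\bg$, then differentiating $\mathrm{Ric}=(n-1)\lambda g$ along the flow of $X$ and using $n\geq3$ forces $\nabla^2_{\bg}v=-\lambda v\bg$, so $v\in E_{n\lambda}$. By Obata, $v\neq0$ only on the round sphere, where conversely $\mathcal{L}_{\bg}(\nabla u)=-2\lambda u\bg$ for $u\in E_{n\lambda}$. Combined with York's decomposition, this gives $S_2(M)=\mathrm{Im}\,\mathcal{L}_{\bg}\oplus V$ (with $E^\perp_{n\lambda}$ on the sphere). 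That is exactly the transversality the implicit function theorem needs for the gauge above.

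Finally, the map $X\mapsto\delta_{\bg}(\exp(X)^*g-\bg)$ is not $C^1$ when $g$ is only in $W^{2,p}$. The standard fix is naturality: $\delta_{\bg}(\varphi^*g)=\varphi^*\bigl(\delta_{(\varphi^{-1})^*\bg}\,g\bigr)$. This way diffeomorphisms act only on the smooth metric $\bg$, and the same trick works for the trace-free gauge.
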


Applying the slice theorem, we can restrict $\mathcal{H}_{{\bg}}$ on a local slice $\mathcal{S}_{\bg}$, and denote it by $\mathcal{H}_{{\bg}}^{\mathcal{S}}$. To investigate the behavior of $\mathcal{H}_{{\bg}}^{\mathcal{S}}$, we need the following Morse lemma \cite{Fischer-Marsden}:

\begin{lemma}[Morse lemma]\label{lemma: morse}
  Let $\mathcal{P}$ be a Banach manifold and $F: \mathcal{P} \rightarrow \mathbb{R}$ a $C^2-$function. Suppose that $\mathcal{Q} \subset \mathcal{P}$ is a submanifold satisfies
    \begin{equation*}
        F = 0 \ \  \text{and} \ \ dF = 0  \ \ \text{on}  \ \ \mathcal{Q} 
    \end{equation*}
  and that there is a smooth normal bundle neighborhood of $\mathcal{Q}$ such that if $\mathcal{E}_x$ is the normal complement to $T_x\mathcal{Q}$ in $T_x\mathcal{P}$, then $d^2F(x)$ is weakly negative definite on $\mathcal{E}_x$, that is
    \begin{equation*}
        d^2F(x)(v,v) \leq 0
    \end{equation*}
  with equality only if $v = 0$. Let $\langle \cdot, \cdot \rangle_x$ be a weak Riemannian structure with a smooth connection and assume that $F$ has a smooth $\langle \cdot, \cdot \rangle_x$-gradient $Y(x)$. Assume that 
  \begin{equation*}
      DY(x): \mathcal{E}_x \rightarrow \mathcal{E}_x
  \end{equation*}
  is an isomorphism for $x \in \mathcal{Q}$. Then there is a neighborhood $\mathcal{U}$ of $\mathcal{Q}$ such that $y \in \mathcal{U}$ and $F(y) \geq 0$ implies that $y \in \mathcal{Q}$.
\end{lemma}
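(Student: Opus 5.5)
The plan is to work in the normal bundle neighborhood of $\mathcal{Q}$ and reduce to a one-variable Taylor argument along normal geodesics. Every $y$ near $\mathcal{Q}$ can be written uniquely as $y=\exp_x(v)$ with $x\in\mathcal{Q}$ and $v\in\mathcal{E}_x$ small, using the smooth connection of $\langle\cdot,\cdot\rangle$. The goal is to show that $F(\exp_x v)<0$ whenever $v\neq 0$ is small. The neighborhood $\mathcal{U}$ is then the image of a small disc bundle, and $F(y)\geq 0$ forces $v=0$, i.e. $y\in\mathcal{Q}$.

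\textbf{Taylor expansion along normal geodesics.} Fix $x\in\mathcal{Q}$ and $v\in\mathcal{E}_x$, and set $\gamma(t)=\exp_x(tv)$ and $f(t)=F(\gamma(t))$. Since $F=0$ and $dF=0$ on $\mathcal{Q}$, we have $f(0)=f'(0)=0$. Because $dF(z)\cdot w=\langle Y(z),w\rangle_z$, the gradient vanishes on $\mathcal{Q}$. The expansion is therefore
\begin{equation*}
Y(\exp_x(tv)) = t\,DY(x)v + R_x(tv), \qquad f'(t)=\langle Y(\gamma(t)),\gamma'(t)\rangle_{\gamma(t)}.
\end{equation*}
Here $R_x$ is quadratically small in the Banach norm, uniformly for $x$ in a neighborhood, by smoothness of $Y$ and of the connection. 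Integrating gives
\begin{equation*}
F(\exp_x v)=\tfrac12\, d^2F(x)(v,v) + \int_0^1 \langle R_x(tv),\gamma'(t)\rangle\,dt + (\text{connection terms}),
\end{equation*}
where $d^2F(x)(v,v)=\langle DY(x)v,v\rangle_x$ on $\mathcal{E}_x$.

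\textbf{Beating the remainder (main obstacle).} Weak negative definiteness alone only gives $d^2F(x)(v,v)<0$. It does not give a bound $\le -c\|v\|^2$ in the strong norm, so it cannot absorb the remainder, and this is exactly where the isomorphism hypothesis enters. First, $DY(x)$ is symmetric with respect to $\langle\cdot,\cdot\rangle_x$, being the Hessian at a critical point. It is also a weakly negative isomorphism of $\mathcal{E}_x$. These two facts should yield a coercive estimate of the form
\begin{equation*}
-\langle DY(x)v,v\rangle_x \geq c\,\|DY(x)v\|_{*}\,\|v\|_{*},
\end{equation*}
in a norm $\|\cdot\|_{*}$ adapted to the weak metric, for instance the energy norm $\|v\|_{*}^2=-\langle DY(x)v,v\rangle$. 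The next step is to bound the remainder term by $o(1)\,\|DY(x)v\|\,\|v\|$, using the bounded inverse $DY(x)^{-1}$ to compare norms. I would first try to get this uniformly in $x$ by continuity of $DY$ along $\mathcal{Q}$ and openness of the set of isomorphisms. If the energy-norm comparison is too weak, the fallback is the argument of Fischer--Marsden. There one uses the isomorphism to show that $\|Y(y)\|\geq c\|v\|$ near $\mathcal{Q}$, so $\mathcal{Q}$ is exactly the critical set of $F$ nearby. One then follows the gradient flow of $F$, which increases $F$: a point with $F(y)\geq 0$ off $\mathcal{Q}$ would flow, with $F$ nondecreasing and strictly increasing away from critical points, toward $\mathcal{Q}$, where $F=0$. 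This contradicts $F\ge 0$ at the starting point, since it would force $F(y)<0$.

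\textbf{Conclusion.} With either estimate, $F(\exp_x v)\le -c'\|v\|_{*}^2<0$ for all sufficiently small nonzero $v\in\mathcal{E}_x$. This holds uniformly on a neighborhood of each point of $\mathcal{Q}$, which gives the desired $\mathcal{U}$.
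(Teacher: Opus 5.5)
\textbf{Verdict: there is a genuine gap, at exactly the step you flag as the main obstacle.}

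The paper gives no proof of this lemma; it quotes it from Fischer--Marsden, where it is a parametrized Morse--Palais lemma for weak Riemannian structures, in the spirit of Tromba.

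\textbf{Where the main route fails.} Your coercive inequality is not the real issue. With $\|v\|_*^2=-\langle DY(x)v,v\rangle_x$ it only says that $DY(x)$ is bounded in the energy norm. That is true, since $DY(x)$ is $\langle\cdot,\cdot\rangle_x$-symmetric and bounded on $\mathcal{E}_x$ and an iterated Cauchy--Schwarz argument gives it, but you do not prove it. The failure is the next step, bounding the remainder by $o(1)\|v\|_*^2$. Once you treat $R_x(tv)$ as a generic error, known only to be quadratically small in the Banach norm of $\mathcal{P}$, the best you get is $|F(\exp_x v)-\tfrac12 d^2F(x)(v,v)|\le C\|v\|^3$, or $C\|v\|^2\|v\|_*$. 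In the genuinely weak case $\|v\|_*/\|v\|$ can be arbitrarily small for small $v$, so this error is never absorbed by $-\tfrac12\|v\|_*^2$. The bounded inverse $DY(x)^{-1}$ cannot ``compare norms'': it is bounded in one norm on $\mathcal{E}_x$ and gives no domination of the Banach norm by the energy norm.

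\textbf{Why the fallback is circular.} The bound $\|Y(y)\|\ge c\|v\|$ shows that $\mathcal{Q}$ is the only nearby critical set. It does not show that the ascending flow of $Y$ from $y$ stays in $\mathcal{U}$ and converges to $\mathcal{Q}$. In finite dimensions that follows because superlevel sets near a strict transversal maximum are small. That smallness is exactly what weak definiteness fails to provide.

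\textbf{The missing idea.} Keep the algebraic structure of the remainder instead of estimating its size. Write points of the normal-bundle neighborhood as $x+v$ with $x\in\mathcal{Q}$, $v\in\mathcal{E}_x$, and set $f_x(v)=F(x+v)$. Let $Z_x$ be the gradient of $f_x$ for the fixed inner product $\langle\cdot,\cdot\rangle_x$, projected to $\mathcal{E}_x$; producing a smooth $Z_x$ from $Y$ is what the smooth-gradient and smooth-connection hypotheses are for. Since $f_x(0)=0$ and $df_x(0)=0$,
\begin{equation*}
f_x(v)=\tfrac12\langle A_x(v)v,v\rangle_x,\qquad A_x(v)=2\int_0^1(1-t)\,DZ_x(tv)\,dt .
\end{equation*}
Each $A_x(v)\in L(\mathcal{E}_x,\mathcal{E}_x)$ is $\langle\cdot,\cdot\rangle_x$-symmetric, because $d^2f_x$ is symmetric. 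It depends continuously on $(x,v)$ in operator norm, and $A_x(0)=DY(x)$.

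This is where the isomorphism hypothesis enters. The operator $T_x(v)=DY(x)^{-1}A_x(v)$ is close to the identity in $L(\mathcal{E}_x)$ and symmetric for $d^2F(x)$. Hence $C_x(v)=T_x(v)^{1/2}$, defined by the binomial series, is again $d^2F(x)$-symmetric and invertible, and it satisfies
\begin{equation*}
f_x(v)=\tfrac12\,d^2F(x)\big(C_x(v)v,\,C_x(v)v\big).
\end{equation*}
Weak negative definiteness then gives $f_x(v)<0$ for every small $v\neq0$, uniformly for $x$ near a given point of $\mathcal{Q}$, which is the conclusion.

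Equivalently, put $B=-d^2F(x)$ and $K=T_x(v)-I$. For $B$-symmetric $K$ the iterated Cauchy--Schwarz bound $|B(Ku,u)|\le\|K\|_{L(\mathcal{E}_x)}B(u,u)$ holds, and it gives $f_x(v)\le-\tfrac12(1-\|K\|)B(v,v)$. That is the correct form of the energy-norm comparison you were reaching for.
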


Combining \Cref{prop: second order variation of functional} and  \Cref{Theorem: Ebin-Palais slice theorem}, we can find a local slice $\mathcal{S}_{\bar{g}}$ through $\bar{g}$, and we consider $\mathcal{Q}_{\bar{g}}$ as a submanifold of $\mathcal{S}_{\bar{g}}$, where
\begin{align*}
  \mathcal{Q}_{\bar{g}} := \{ c^2 \bar{g} \in \mathcal{S}_{\bar{g}} | c \neq 0 \}.
\end{align*}
Then we obtain the following rigidity result:

\begin{proposition}\label{prop:rigidity}
  Suppose $(M^n,\bar{g})$ is a strictly stable Einstein manifold with Ricci curvature
  \begin{align*}
    \mathrm{Ric}_{\bar{g}}=(n-1)\lambda\bar{g}
  \end{align*}
  where $\lambda>0$ is a constant. Given the indices $1 \leq q \leq p \leq n$ satisfy either $2(p-q) \leq n$ or $2(p-q) \geq n + 2(k-l)$, there is a neighborhood of $\bar{g}$ in the local slice $\mathcal{S}_{\bar{g}}$, denoted by $\mathcal{U}_{\bar{g}}$, such that, for any metric $g\in \mathcal{U}_{\bar{g}} $ satisfying
  \begin{align*}
    \mathcal{H}_{\bar{g}}^{\mathcal{S}}(g) \geq \mathcal{H}_{\bar{g}}^{\mathcal{S}}(\bg) 
  \end{align*}
 it follows that $g=c^2\bar{g}$ for some positive constant $c$.
\end{proposition}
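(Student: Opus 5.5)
The plan is to apply the Morse lemma (\Cref{lemma: morse}) on the Banach manifold $\mathcal{P}=\mathcal{S}_{\bg}$, with submanifold $\mathcal{Q}=\mathcal{Q}_{\bg}$ and function
\begin{equation*}
F(g) := \mathcal{H}^{\mathcal{S}}_{\bg}(g) - \mathcal{H}_{\bg}(\bg).
\end{equation*}

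We first check the hypotheses on $\mathcal{Q}$. For $g$ in $\mathcal{Q}$, \Cref{proposition:3.1} gives $\mathcal{H}_{\bg}(c^2\bg)=\mathcal{H}_{\bg}(\bg)$, so $F$ vanishes on $\mathcal{Q}$. For $dF=0$ on $\mathcal{Q}$, we use scaling invariance: $D\mathcal{H}_{\bg}(c^2\bg)\cdot h = c^{-2}D\mathcal{H}_{\bg}(\bg)\cdot h$, since $\mathcal{H}_{\bg}(c^2 g)=\mathcal{H}_{\bg}(g)$ for all $g$. By \Cref{proposition:einstein_critical} this is zero. The second variation likewise rescales, $D^2\mathcal{H}_{\bg}(c^2\bg)(h,h)=c^{-4}D^2\mathcal{H}_{\bg}(\bg)(h,h)$. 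It therefore suffices to study $D^2F$ at $\bg$.

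Next we identify the normal complement. We have $T_{\bg}\mathcal{Q}=\mathbb{R}\cdot\bg$, and take $\mathcal{E}_{\bg}$ to be the $L^2(\bg)$-orthogonal complement of $\mathbb{R}\bg$ inside $T_{\bg}\mathcal{S}_{\bg}$, as described by \Cref{Theorem: Ebin-Palais slice theorem}. In the non-spherical case this consists of $\oc h + \frac{u}{n}\bg$ with $\oc h\in S^{TT}_{2,\bg}$ and $\int_M u\,dv_{\bg}=0$. In the spherical case we additionally have $u\perp E_{n\lambda}$. By \Cref{prop: characteristic_second_order}, $D^2F(\bg)\le 0$ on $\mathcal{E}_{\bg}$, and equality forces $h\in\mathbb{R}\bg$ or $h\in E_{n\lambda}\bg$. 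Both are excluded in $\mathcal{E}_{\bg}$, so $D^2F$ is weakly negative definite there. The boundary case $2(p-q)=n$ is covered because $\beta=0$ simply kills the last term; the strictness still comes from the Bochner term, or the $\oc h$ term via \Cref{lemma: integer_inequality}. The Lichnerowicz term needs care here: when $\beta=0$, or when $\alpha\beta(\alpha+\beta)\ge 0$ in the stated ranges, one must check that some term still forces a nonconstant $u$ to vanish. For nonconstant $u$, the Bochner term is strictly positive unless $\nabla^2 u = \frac{\Delta u}{n}\bg$, which by Obata happens only on the sphere with $u\in E_{n\lambda}$. The coefficient $2(p-q)(q-l)+nl\ge nl>0$ then gives strictness.

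The main obstacle is the isomorphism condition on $DY(\bg):\mathcal{E}_{\bg}\to\mathcal{E}_{\bg}$, where $Y$ is the $L^2(\bg)$-gradient of $F$. We take $\langle\cdot,\cdot\rangle$ to be the $L^2(\bg)$ inner product, so $DY(\bg)$ is the self-adjoint operator representing $D^2F(\bg)$. From \Cref{prop: second order variation of functional}, it acts diagonally on the splitting. On the TT part it is a fourth-order operator $-a(\Delta_E^{\bg})^2+b\Delta_E^{\bg}$ with $a\ge0$, $b>0$. On the trace part it is a fourth-order (or second-order, if the coefficients degenerate) polynomial in $\Delta_{\bg}$ acting on $u$. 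Both are elliptic, and we will show they have trivial kernel on $\mathcal{E}_{\bg}$ by the strict negativity just established. Working in suitable Sobolev completions, for instance $W^{4,p}\to L^p$, the Fredholm alternative for self-adjoint elliptic operators then gives an isomorphism. Strict stability ensures the TT symbol is strictly negative on its spectrum, and the trace part has spectrum bounded away from zero on functions with mean zero orthogonal to $E_{n\lambda}$.

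Finally, the Morse lemma yields a neighborhood $\mathcal{U}_{\bg}$ of $\mathcal{Q}$ in $\mathcal{S}_{\bg}$ on which $F(g)\ge0$ implies $g\in\mathcal{Q}$, that is, $g=c^2\bg$ with $c>0$.
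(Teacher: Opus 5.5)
Your overall architecture matches the paper's: the Morse lemma on the slice with $F=\mathcal{H}_{\bar g}-\mathcal{H}_{\bar g}(\bar g)$, $\mathcal{Q}$ the homotheties, and $F=dF=0$ on $\mathcal{Q}$ by scaling. The negativity step, however, fails. Your inequality $2(p-q)(q-l)+nl\geq nl$ requires $q\geq l$. In the range $2(p-q)\le n$ the quantity is still positive: if $q<l$ it is $\ge nl-n(l-q)=nq$. In the range $2(p-q)\geq n+2(k-l)$ it can be negative. Take $n=5$ and $(k,l,p,q)=(4,3,5,1)$. Then $2(p-q)=8\geq 7$, $\alpha=2$, $\beta=-3$, and $2(p-q)(q-l)+nl=-1$, so the Bochner term in Proposition~\ref{prop: second order variation of functional} has a positive fourth-order coefficient. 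For $h=\frac{u}{n}\bar g$ with $\Delta_{\bar g}u=-\mu u$ and $\int_M u\,dv_{\bar g}=0$, the formula gives
\[
[\mathcal{H}_{\bar g}(\bar g)]^{-1}\mathrm{Vol}_{\bar g}\,D^2\mathcal{H}_{\bar g}(\bar g)(h,h)=\frac{2}{625\lambda^2}(\mu-5\lambda)\Big(\mu-\frac{15}{4}\lambda\Big)\int_M u^2\,dv_{\bar g}.
\]
This is positive for every eigenvalue $\mu>5\lambda$. You can confirm it independently by expanding $\mathcal{H}_{\bar g}(e^{2tw}\bar g)$ to order $t^2$. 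Such $h$ lie in $\mathcal{E}_{\bar g}$, so $D^2F(\bar g)$ is indefinite there. A curve in $\mathcal{S}_{\bar g}$ with this tangent satisfies $\mathcal{H}_{\bar g}>\mathcal{H}_{\bar g}(\bar g)$ for small $t\neq 0$ without being homothetic to $\bar g$. So the statement itself fails for these indices, for instance on the round $S^5$. The paper's Proposition~\ref{prop: characteristic_second_order} rests on the same false claim that $2(p-q)(q-l)+nl\ge 0$. Your argument only goes through under an extra hypothesis such as $2(p-q)(q-l)+nl>0$, which holds automatically when $2(p-q)\le n$ or $q\ge l$.

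Separately, your isomorphism step does not meet the hypothesis of Lemma~\ref{lemma: morse}. With the $L^2(\bar g)$ structure the gradient $Y$ involves fourth derivatives of $g$. It is therefore not a smooth vector field on a Banach manifold of $W^{s,p}$ metrics, and $DY(\bar g)$ is a fourth-order operator. An isomorphism $W^{4,p}\to L^p$ is not an isomorphism $\mathcal{E}_{\bar g}\to\mathcal{E}_{\bar g}$. The order has to be absorbed into the weak metric, for example $\langle h,k\rangle_g=\int_M\langle(1-\Delta_g)^2h,k\rangle\,dv_g$. Then $DY(\bar g)=P_{\bar g}(1-\Delta_{\bar g})^{-2}\circ\mathrm{Hess}$ has order zero, and ellipticity plus your injectivity argument gives the isomorphism. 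This still requires the fourth-order coefficients on both the TT part and the trace part to be nonzero; the trace coefficient is exactly the quantity discussed above. The paper's choice of $(1-\Delta_g)$ has the same order mismatch, and the paper asserts the isomorphism without proof.
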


\begin{proof}
    To align with Lemma \ref{lemma: morse} in the Banach space setting, we set $\mathcal{P} = \mathcal{S}_g$ and define 
    \begin{equation}
        F = \mathcal{H}_{\bar{g}}^{\mathcal{S}}(g) - \mathcal{H}_{\bar{g}}^{\mathcal{S}}(\bar{g}).
    \end{equation}
    Since $\mathcal{H}_{\bar{g}}^{\mathcal{S}}(g)$ is scale-invariant, we have
    \begin{equation*}
        F = 0 \ \  \text{and} \ \ dF = 0  \ \ \text{on}  \ \ \mathcal{Q}_{\bg} := \{ c^2 \bar{g} \in \mathcal{P} | c \neq 0 \}.
    \end{equation*}
    The tangent space of $\mathcal{Q}_{\bg}$ is 
    \begin{equation*}
        T_{\bg} \mathcal{Q}_{\bg} = \mathbb{R}\cdot \bg
    \end{equation*}
    and its $L^2$-complement in $T_{\bg} \mathcal{P}$ is given as follow:
    \begin{itemize}
        \item if $\bar{g}$ is not spherical, then
        \begin{align*}
          \mathcal{E}_{\bar{g}} = S^{TT}_{2,\bar{g}}(M) \oplus \bigg\{u \cdot \bar{g} \bigg| u \in C^{\infty}(M), \int_M tr_{\bar{g}} u dv_{\bar{g}}=0\bigg\},
        \end{align*}
        \item if $\bar{g}$ is spherical, then
        \begin{align*}
          \mathcal{E}_{\bar{g}} = S^{TT}_{2,\bar{g}}(M) \oplus \bigg\{u \cdot \bar{g} \bigg| u \in E_{n \lambda}^{\perp}, \int_M tr_{\bar{g}} u dv_{\bar{g}}=0\bigg\}.
        \end{align*}
    \end{itemize}
    Define a weak Riemannian structure on $\mathcal{P}$:
    \begin{equation*}
        \langle\langle h,k\rangle\rangle_g := \int_M \left(\langle\nabla_g h, \nabla_g k\rangle_g + \langle h,k\rangle_g\right) \, dv_g = \int_M \langle(-\Delta_g + 1)h, k\rangle_g \, dv_g
    \end{equation*}
    for any $g \in \mathcal{P}$ and $h, k \in T_g\mathcal{P}$.
    We also define $\Gamma_{k,l,g}^*$ to be the $L^2$ adjoint of $D\left( \frac{\sigma_k(g)}{\sigma_l(g)}\right)$, and we define the function $f$ by the relation $dv_{\bar{g}} = f_g \, dv_g$. With these expressions, recall the first order variation formula in \Cref{eq: first_variation}, we have:
    \begin{equation*}
        \begin{split}
            D \mathcal{H}_{\bg} (g) (h) = \mathcal{H}_{\bg}(g) \bigg\{ 
            &2(k-l) \left( \int_M \frac{\sigma_p(g)}{\sigma_q(g)} dv_g \right)^{-1} D \left( \int_M \frac{\sigma_p(g)}{\sigma_q(g)} dv_g \right) (h) \\
            & + [n - 2(p-q)] \left( \int_M \frac{\sigma_k(g)}{\sigma_l(g)} dv_{\bg} \right)^{-1} D \left( \int_M \frac{\sigma_k(g)}{\sigma_l(g)} dv_{\bg} \right) (h)\bigg\}
        \end{split}
    \end{equation*}
    According to \cite{Ebin-slice}, it has a smooth connection and the $\langle \langle \cdot, \cdot \rangle \rangle_g$-gradient of $F$ is
    \begin{equation*}
        \begin{split}
            Y(g) = P_g \left( -\Delta_g + 1 \right)^{-1}  \bigg\{ 
            &2(k-l) \mathcal{H}_{\bg}(g) \left( \int_M \frac{\sigma_p(g)}{\sigma_q(g)} dv_g \right)^{-1} \left( \Gamma_{p, q, g}(1) + \frac{1}{2} \frac{\sigma_p(g)}{\sigma_q(g)} g\right) \\
            & + [n - 2(p-q)] \mathcal{H}_{\bg}(g) \left( \int_M \frac{\sigma_k(g)}{\sigma_l(g)} dv_{\bg} \right)^{-1} \Gamma_{k,l,g}(f_g) \bigg\}
        \end{split}
    \end{equation*}
    in which $P_g$ is the orthogonal projection to $T_g \mathcal{P}$. It is easy to see that $Y(g)$ is smooth and its linearization at $\bg$ is
    \begin{equation*}
        DY(\bg) \cdot h = P_{\bg} \left( -\Delta_{\bg} + 1 \right)^{-1} \left( D^2 \mathcal{H}_{\bg} ({\bg}) (h, h) \right)
    \end{equation*}
    for any $h \in \mathcal{E}_{\bar{g}}$. Given the indices $0 \leq q \leq p \leq n$ satisfy either $2(p-q) \leq n$ or $2(p-q) \geq n + 2(k-l)$, the second order variation is strictly negative defined on $\mathcal{E}_{\bar{g}}$, thus, $DY(\bg)$ is an isomorphism on $\mathcal{E}_{\bar{g}}$ due to Lemma \ref{lemma: morse}, and there is a neighborhood $\mathcal{U}$ of $\mathcal{Q}$ such that for any $g \in \mathcal{U}$, $g \in \mathcal{Q}$, that is
    \begin{equation*}
        g = c^2 \bg
    \end{equation*}
    for a constant $c$ and $\mathcal{H}_{\bg}^{\mathcal{S}} (g) \geq \mathcal{H}_{\bg}^{\mathcal{S}} (\bg)$.
\end{proof}

Now we can prove the main Theorem A.

\begin{proof}[Proof of \Cref{theorem: main1}]
    Let $\lambda > 0$ and  $g$ be any Riemannian metric on $M$ satisfying $\|g - \bar{g}\|_{C^2(M, \bar{g})}  < \varepsilon_0$. By Proposition \ref{prop:rigidity}, there exists a diffeomorphism $\varphi$ such that $\varphi^*g \in \mathcal{U}_{\bar{g}} \subset \mathcal{S}_{\bar{g}}$. For  simplicity, here we identify $g$ with its pullback $\varphi^*g$ and assume $g \in \mathcal{U}_{\bar{g}}$.
    
    We argue by contradiction. Suppose that one of the assumptions in (1) or (2) holds, but the conclusion fails. Then the reverse total curvature comparison holds
    \begin{equation} \label{eq:contra_assumption}
        \int_M \frac{\sigma_p(g)}{\sigma_q(g)} \, dv_g \geq \int_M \frac{\sigma_p(\bar{g})}{\sigma_q(\bar{g})} \, dv_{\bar{g}}.
    \end{equation}
   
    Note that
    \begin{equation*}
        \begin{split}
            \mathcal{H}_{\bar{g}}(g) 
            = \left[ \int_M \frac{\sigma_p(g)}{\sigma_q(g)} \, dv_{g} \right]^{\alpha} \left[ \int_M \frac{\sigma_k(g)}{\sigma_l(g)} \, dv_{\bar{g}} \right]^{\beta}.
        \end{split}
    \end{equation*}
    and $\beta=n-2(p-q)$, $\alpha=k-l>0$, then by assumption (1) or (2), it follows from \eqref{eq:contra_assumption} that
 \begin{equation*}
        \begin{split}
            \mathcal{H}_{\bar{g}}(g) 
            \geq \left[ \int_M \frac{\sigma_p(\bar{g})}{\sigma_q(\bar{g})} \, dv_{\bar{g}} \right]^{\alpha} \left[ \int_M \frac{\sigma_k(\bar{g})}{\sigma_l(\bar{g})} \, dv_{\bar{g}} \right]^{\beta} 
            = \mathcal{H}_{\bar{g}}(\bar{g}).
        \end{split}
    \end{equation*}
which contradicts with Proposition \ref{prop: characteristic_second_order}, so we have 
\begin{equation*} 
        \int_M \frac{\sigma_p(g)}{\sigma_q(g)} \, dv_g \leq \int_M \frac{\sigma_p(\bar{g})}{\sigma_q(\bar{g})} \, dv_{\bar{g}}.
    \end{equation*}

Furthermore, if the equality holds, by the rigidity result in Proposition \ref{prop:rigidity}, this implies that $g$ must be conformal to $\bar{g}$. Thus, $g = c^2 \bar{g}$ for some positive constant $c$.
    
    We claim that $c=1$. Under the scaling $g = c^2 \bar{g}$, \eqref{eq:contra_assumption} can be rewritten as
    \begin{equation*}
        \int_M \frac{\sigma_p(g)}{\sigma_q(g)} \, dv_g = c^{n-2(p-q)} \int_M \frac{\sigma_p(\bar{g})}{\sigma_q(\bar{g})} \, dv_{\bar{g}} \geq \int_M \frac{\sigma_p(\bar{g})}{\sigma_q(\bar{g})} \, dv_{\bar{g}}>0,
    \end{equation*}
    which implies $c^{n-2(p-q)} \geq 1$. Consequently,
    \begin{equation}
         \ c \geq 1 \ \ \text{if} \ \ 2(p-q) < n; \quad \text{and} \ c \leq 1 \ \ \text{if} \ \ 2(p-q) \geq n + 2(k-l).
    \end{equation}
    On the other hand, it holds
    \begin{equation*}
        \frac{\sigma_k(g)}{\sigma_l(g)} = c^{-2(k-l)} \frac{\sigma_k(\bar{g})}{\sigma_l(\bar{g})}>0.
    \end{equation*}
   By assumption (1) or (2), we have
    \begin{equation*}
         c^{-2(k-l)} \geq 1 \ \ \text{if} \ \ 2(p-q) < n; \quad \text{and} \quad c^{-2(k-l)} \leq 1 \ \ \text{if} \ \ 2(p-q) \geq n + 2(k-l).
    \end{equation*}
    Since $k > l$, then it follows
    \begin{equation}
         \ c \leq 1 \ \ \text{if} \ \ 2(p-q) < n; \quad \text{and} \quad \ c \geq 1 \ \ \text{if} \ \ 2(p-q) \geq n + 2(k-l).
    \end{equation}
    
   Therefore, $c = 1$ and $\varphi^*g =g= \bar{g}$, which completes the proof.
\end{proof}

\begin{remark}
    When $\lambda < 0$, the definite sign of the second variation $D^2\mathcal{H}_{\bar{g}}(\bg)(h,h)$ becomes delicate. In this case, if $k-l$ is even, the first and last terms in the expression for $D^2\mathcal{H}_{\bar{g}}(\bg)(h,h)$ remain negative, but the second term may dominate and compromise the overall negativity. Conversely, if $k-l$ is odd, the second term is negative, but the first and last terms can become positive, undermining the negative definiteness of the functional. Consequently, a volume comparison theorem of the type considered here is not expected to hold in this regime.
\end{remark}

\bibliographystyle{plain} 
\bibliography{references}

\end{document}